\newtheorem{thm}{Theorem}[section] 
\newtheorem{conj}[thm]{Conjecture}
\newtheorem{cor}[thm]{Corollary}
\newtheorem{lem}[thm]{Lemma}
\newtheorem{prop}[thm]{Proposition}
\newtheorem{defn}[thm]{Definition}
\sloppy  \allowdisplaybreaks[4]
\begin{document}

    \title[Transitive partially hyperbolic diffeomorphisms in dimension three]{Transitive partially hyperbolic diffeomorphisms in dimension three}

	\author{Ziqiang Feng}
    \address{Beijing International Center for Mathematical Research, Peking University, Beijing, 100871, China.}
    \email{zqfeng@pku.edu.cn}

    \thanks{This work was partially supported by National Key R\&D Program of China 2022YFA1005801.}

	\begin{abstract}
        We prove that any $C^{1+\alpha}$ transitive conservative partially hyperbolic diffeomorphism of a closed 3-manifold with virtually solvable fundamental group is ergodic. Consequently, in light of \cite{FP-classify}, this establishes the equivalence between transitivity and ergodicity for $C^{1+\alpha}$ conservative partially hyperbolic diffeomorphisms in \emph{any} closed 3-manifold. Moreover, we provide a characterization of compact accessibility classes under transitivity, thereby giving a precise classification of all accessibility classes for transitive 3-dimensional partially hyperbolic diffeomorphisms.
	\end{abstract}

    \date{\today}

	\maketitle


	\vspace{.5cm}

	{\bf Keywords}: Partial hyperbolicity, transitivity, ergodicity, accessibility class

    {\bf MSC}: 37D30, 37A25, 37C15

\tableofcontents

\section{Introduction}

The study of partially hyperbolic dynamics has attracted tremendous interest as one of the most significant extensions of uniformly hyperbolic systems.  
A partially hyperbolic diffeomorphism $f:M\to M$ retains uniformly hyperbolic behavior in its extreme bundles, $E^s$ and $E^u$, within its invariant splitting $TM=E^s\oplus E^c\oplus E^u$, but exhibits dominated pointwise spectrum in the middle bundle $E^c$. The presence of the center bundle $E^c$ and the dynamical flexibility of the center action along $E^c$ enlarge the difference between partially hyperbolic systems and uniformly hyperbolic ones.

While uniformly hyperbolic systems have been well understood in many aspects, certain fundamental properties remain far from being known in the partially hyperbolic context. The purpose of this paper is to reveal a intrinsic relation between the transitivity and ergodicity for partially hyperbolic diffeomorphisms in dimension three. Besides, we are going to provide a classification of accessibility classes.

\subsection{Ergodicity}

Transitivity and ergodicity are two fundamental properties from the topological and measure-theoretical perspectives, respectively, for general dynamical systems, not only within partially hyperbolic dynamics. Although partially hyperbolic systems lack structural stability, they arise naturally in the study of stable ergodicity \cite{GPS94} and robust transitivity \cite{DPU}.

The verification of ergodicity for uniformly hyperbolic systems is built upon Hopf's argument \cite{Hopf}, extended by Anosov and Sinai \cite{Anosov67,AnosovSinai67}. Adapting this argument to the partially hyperbolic context becomes highly non-trivial due to the lack of hyperbolicity in the center direction, which complicates the ergodic problem for such systems. Based on the work of \cite{GPS94}, Pugh and Shub proposed a way to examine ergodicity through accessibility-a property that captures pathwise hyperbolicity-and conjectured that ergodic partially hyperbolic diffeomorphisms constitute an open and dense set among conservative ones. Significant progress on the Pugh--Shub conjecture, including results from \cite{HHU08invent,BW10,ACW}, has established the prevalence of ergodicity in dimension three.

In 2024, Meysam Nassiri presented us with a conjecture that he had long held, which connects the two fundamental properties, transitivity and ergodicity, in the partially hyperbolic context. The conjecture was initially proposed in the three-dimensional case but is expected to hold for systems with a one-dimensional center bundle. We formulate this more general conjecture below.

\begin{conj}[Nassiri]\label{conj}
    Every $C^{1+\alpha}$ transitive conservative partially hyperbolic diffeomorphism with one-dimensional center on a closed manifold is ergodic.
\end{conj}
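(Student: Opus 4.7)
The plan is to reduce the conjecture to a pure accessibility statement and then rule out non-accessibility under the transitivity hypothesis. In view of the Burns--Wilkinson theorem---together with the fact that center bunching is automatic for $C^{1+\alpha}$ systems with one-dimensional center---a conservative accessible partially hyperbolic diffeomorphism in this class is automatically ergodic. Thus it suffices to show that a $C^{1+\alpha}$ transitive conservative partially hyperbolic diffeomorphism $f$ with one-dimensional center on a closed manifold must be accessible. I would proceed by contradiction, assuming that some accessibility class fails to be open.

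Granting non-accessibility, the set $\Omega\subseteq M$ of points with open accessibility class is open and $f$-saturated, so by transitivity it is either empty or dense; in either event $\Lambda = M\setminus\Omega$ is a nonempty, closed, $f$-invariant set. When $\dim E^c=1$, the Hertz--Hertz--Ures analysis of accessibility classes shows that the points of $\Lambda$ are organized into a lamination whose leaves are codimension-one injectively immersed submanifolds tangent to $E^s\oplus E^u$; passing to a minimal component $\Lambda_0\subseteq\Lambda$, the map $f$ acts on $\Lambda_0$ as a hyperbolic-like homeomorphism carrying its transverse stable and unstable sub-foliations. The central task is to rule out the coexistence of such a minimal $su$-lamination with transitivity of the ambient $f$. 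Here I would split into two subcases according to whether $\Lambda_0$ contains a compact leaf. If yes, a topological Anosov action on a compact surface forces that surface to be a two-torus, and transitivity of $f$ pushes this into a periodic toral structure realizing $f$ as a skew product over an Anosov base, from which ergodicity is classical; this is the content of the ``characterization of compact accessibility classes'' announced in the abstract. If no, one must preclude an exceptional minimal $su$-lamination by combining conservativity, fine control of center holonomy along leaves of $\Lambda_0$, and a Hopf-type argument carried out transverse to the lamination.

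The main obstacle I expect is precisely the second subcase: ruling out exceptional (non-compact) minimal $su$-laminations for conservative transitive systems. This requires a quantitative understanding of how the center holonomy distorts Lebesgue measure along leaves of $\Lambda_0$, together with a transversal argument showing that any $f$-invariant transverse measure to $\Lambda_0$ would either violate transitivity or yield a nontrivial invariant function for $f$, contradicting its ergodic decomposition. For the three-dimensional virtually solvable case handled in the present paper, the Hammerlindl--Potrie classification of partially hyperbolic diffeomorphisms allows one to replace this geometric step by a model-by-model verification on skew products and derived-from-Anosov systems; the fully general conjecture for manifolds of arbitrary dimension with one-dimensional center appears to require a new mechanism for excluding exceptional $su$-laminations that is not presently available.
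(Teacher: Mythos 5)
The statement you are proving is labelled as a conjecture for a reason: the paper does not prove it in this generality and explicitly states that it ``remains widely open in the general one-dimensional center case.'' What the paper establishes is only the three-dimensional case (Theorems~\ref{transitive<=>ergodic} and~\ref{sol}), via the Fenley--Potrie classification when $\pi_1(M)$ is not virtually solvable and the Hammerlindl--Potrie classification plus the $AB$-system analysis when it is. You acknowledge at the end that the general case needs a mechanism you do not have, so your proposal is honestly a programme rather than a proof; as a sketch of the three-dimensional argument it captures the right skeleton (accessibility case, compact accessibility classes, DA/exceptional lamination case).

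Two concrete problems remain even as a sketch. First, your opening reduction ``it suffices to show that $f$ must be accessible'' is false: the product of a volume-preserving Anosov diffeomorphism of $\mathbb{T}^2$ with an irrational rotation is transitive, conservative, has one-dimensional center, and is not accessible (every accessibility class is a torus). Non-accessible transitive examples genuinely occur and must be proved ergodic, not contradicted away. Second, in the compact-leaf subcase your claim that ergodicity ``is classical'' for a skew product over an Anosov base is wrong as stated --- Anosov $\times$ identity is such a skew product and is not ergodic, and the paper notes one can insert open accessibility classes into a skew product to destroy ergodicity while keeping the fibered structure. What actually makes the argument work (Theorems~\ref{c-dim-one} and~\ref{ergodic-AB}) is that transitivity forces either an $su$-foliation by compact leaves with irrational transverse rotation, or finitely many periodic compact classes whose complementary open accessibility classes are cyclically permuted by $f$; ergodicity then follows from the restricted accessibility-implies-ergodicity theorem (Theorem~\ref{acc->erg}) applied to each open class together with the permutation. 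Moreover, in dimension above three a compact leaf tangent to $E^s\oplus E^u$ need not be a torus and the $AB$-system/skew-product structure is not available without a classification theorem, so even your ``easy'' subcase is open in the generality of the conjecture. Finally, in dimension three the exceptional (non-compact) lamination case is disposed of by the classification results (DA on $\mathbb{T}^3$ plus \cite{HS-DA}, or strong collapsed Anosov flows), not by a Hopf argument transverse to the lamination, so your proposed transversal-measure mechanism is not what the paper uses and would need to be built from scratch.
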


This conjecture was previously established in \cite{Feng25} under the quasi-isometric center condition. This paper provides a complete affirmative answer to the conjecture stated above in the three-dimensional setting.

\begin{thm}\label{transitive<=>ergodic}
    Let $f: M\to M$ be a $C^{1+\alpha}$ conservative partially hyperbolic diffeomorphism of a closed 3-manifold. Then, $f$ is transitive if and only if it is ergodic.
\end{thm}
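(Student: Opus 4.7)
The easy direction, ergodic $\Rightarrow$ transitive, is immediate: for a conservative diffeomorphism the invariant volume has full support, so ergodicity forces almost every orbit to be dense. The bulk of the work lies in the reverse implication. The first move is to invoke the classification of 3-dimensional partially hyperbolic diffeomorphisms from \cite{FP-classify}: on manifolds whose fundamental group is not virtually solvable, transitivity is already known to imply ergodicity. This reduces the problem to $3$-manifolds with virtually solvable $\pi_1$, which (up to finite covers) are the $3$-torus, non-toral nilmanifolds, and solvmanifolds modelled on mapping tori of hyperbolic automorphisms of $\mathbb{T}^2$.

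The strategy will then be accessibility-based. By the Burns--Wilkinson criterion \cite{BW10}, a $C^{1+\alpha}$ accessible center-bunched conservative partially hyperbolic diffeomorphism is ergodic, and $\dim E^c=1$ makes center bunching automatic; so it suffices to prove that a transitive conservative $f$ on such an $M$ must be accessible. Suppose, for contradiction, that it is not. Then the set $\Gamma(f)$ of points whose accessibility class fails to be open is a non-empty closed, $f$-invariant, $su$-saturated subset. The first technical step is to combine transitivity, invariance, and $su$-saturation of $M\setminus\Gamma(f)$ to conclude that $\Gamma(f)=M$, so that the accessibility classes form a $2$-dimensional $f$-invariant lamination $\mathcal{L}$ of $M$ whose leaves are $C^1$-immersed surfaces tangent to $E^s\oplus E^u$.

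The next step is to analyse $\mathcal{L}$ according to whether it has a compact leaf. If it does, the plan is to first establish the characterisation of compact accessibility classes under transitivity announced in the abstract: such a leaf must be a periodic $2$-torus on which some power of $f$ acts as a linear Anosov map. Combining this with the Hammerlindl--Potrie type classification on virtually solvable $3$-manifolds, the presence of such an invariant torus forces a leaf conjugacy to a linear model on which accessibility can either be checked directly or seen to be incompatible with transitivity. If instead every leaf of $\mathcal{L}$ is non-compact, one exploits the classification of codimension-one foliations on nil- and solvmanifolds to control the geometry of the leaves, in particular the growth of center holonomy along them, reducing the problem to the quasi-isometric regime already treated in \cite{Feng25} to obtain ergodicity and thus the desired contradiction.

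The principal obstacle is the non-compact-leaf case on solvmanifolds. There the linear model has exponential distortion in the center direction, so the center bundle is not a priori quasi-isometric and the reduction to \cite{Feng25} is not formal. Controlling the geometry of $\mathcal{L}$ in the universal cover, in a way compatible with the dynamics of the solvable model, is expected to be the most technical part of the argument; the characterisation of compact accessibility classes under transitivity should play a decisive role there as a source of rigidity that forces the lamination $\mathcal{L}$ into a geometrically tractable shape.
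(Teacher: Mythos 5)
Your reduction to the virtually solvable case and your treatment of the easy direction match the paper. However, the core of your strategy --- proving that transitivity forces accessibility and then invoking Burns--Wilkinson --- cannot work, because the intermediate claim is false. The product of a linear Anosov automorphism of $\mathbb{T}^2$ with an irrational rotation of $\mathbb{S}^1$ is a $C^\infty$ conservative, transitive (indeed ergodic) partially hyperbolic diffeomorphism of $\mathbb{T}^3$ that is \emph{not} accessible: $E^s\oplus E^u$ integrates to the foliation by the tori $\mathbb{T}^2\times\{t\}$, so every accessibility class is a compact torus. Hence no argument can derive a contradiction from ``transitive and non-accessible,'' and the step in which you propose to show that $\Gamma(f)=M$ leads to a contradiction is doomed; the same example also refutes your suggestion that a compact accessibility class is ``incompatible with transitivity.''

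The paper's actual route is to prove ergodicity \emph{without} accessibility in the residual cases. By the classification (Theorem~\ref{NW}), a transitive non-accessible $f$ on a manifold with virtually solvable fundamental group is, up to a finite cover and iterate, either a DA diffeomorphism on $\mathbb{T}^3$ (ergodic by \cite{GS-DA}) or an $AB$-system with a compact accessibility class. In the latter case Theorem~\ref{c-dim-one} gives a dichotomy: either the compact classes form an $su$-foliation, in which case $f$ is topologically conjugate to $A\times R_\theta$ with $\theta$ irrational and is ergodic; or there are finitely many periodic compact classes whose complement is a finite union of open accessibility classes $D_1,\dots,D_k$ cyclically permuted by $f$. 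One then applies the restricted ergodicity theorem (Theorem~\ref{acc->erg}, valid in the $C^{1+\alpha}$ setting by \cite{Brown22}) to $f^k$ on each $D_i$ with the normalized volume, and averages the Birkhoff sums over the cycle to conclude ergodicity of $f$ on all of $M$. Your discussion of the non-compact-leaf case (reduction to the quasi-isometric regime of \cite{Feng25}) is both speculative, as you acknowledge, and unnecessary once the classification is in hand.
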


To the best of our knowledge, the conjecture remains widely open in the general one-dimensional center case.

It is worth noting that this theorem is sharp within the given context, as it no longer holds if one relaxes the transitivity condition to having the entire manifold as a single non-wandering set. Indeed, the non-wandering property automatically holds under the conservative condition. Numerous conservative (and thus non-wandering) yet non-ergodic partially hyperbolic systems exist; for instance, the product of an Anosov diffeomorphism with the identity map on the 3-torus and the time-one map of a suspension Anosov flow provide two standard examples.

The work of \cite{FP-classify} and \cite{FP-ajm} obtains accessibility, and thus ergodicity using \cite{HHU08invent,BW10}, for conservative partially hyperbolic diffeomorphisms in closed 3-manifolds with non-virtually solvable fundamental groups. This leads our work to be sufficient to prove ergodicity in the solvable case.

\begin{thm}\label{sol}
    Let $f: M\to M$ be a $C^{1+\alpha}$ conservative partially hyperbolic diffeomorphism of a closed 3-manifold with virtually solvable fundamental group. If $f$ is transitive, then it is ergodic.
\end{thm}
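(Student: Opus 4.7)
I would argue by contradiction: assume $f$ is transitive but not ergodic, and use the topological classification of partially hyperbolic diffeomorphisms on virtually solvable 3-manifolds to derive a contradiction via accessibility.

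The first step is reduction to accessibility. By the Burns--Wilkinson theorem \cite{BW10} together with the one-dimensional-center criterion of \cite{HHU08invent}, a $C^{1+\alpha}$ conservative partially hyperbolic diffeomorphism on a closed 3-manifold is ergodic as soon as it is essentially accessible; so the task reduces to proving essential accessibility of $f$. If $f$ were not essentially accessible, then by \cite{HHU08invent} the union of non-open accessibility classes would have positive Lebesgue measure, and in dimension three its components would be embedded compact $C^1$ surfaces tangent to $E^s\oplus E^u$. The characterization of compact accessibility classes under transitivity advertised in the abstract and established earlier in this paper then forces these surfaces to assemble into an $f$-invariant lamination $\mathcal L$ by $\pi_1$-injective incompressible closed surfaces.

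The second step is to invoke the classification in \cite{FP-classify}: after passing to a finite lift and iterate, $f$ is leaf-conjugate to an algebraic model on $\mathbb T^3$, a non-toral nilmanifold, or a solvmanifold, each carrying explicit, well-understood center-stable and center-unstable foliations. In each model, a closed $\pi_1$-injective embedded surface tangent to $E^s\oplus E^u$ must be isotopic to a specific linear fibre, so the lamination $\mathcal L$ splits $M$ into proper compact $f$-invariant regions with nonempty interior, contradicting the existence of a dense $f$-orbit.

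The step I expect to be the main obstacle is the solvmanifold (or mapping-torus) case, where the center bundle is tangent to the orbit direction of a suspension Anosov flow and the planes $E^s\oplus E^u$ are transverse to the flow direction. Proving there that the leaves of $\mathcal L$ are actually global cross-sections of the flow --- and hence genuinely separate $M$ --- seems to require combining the leaf conjugacy with the global product structure on the universal cover and carefully controlling the ends of $su$-leaves. Only once this separating property is established does transitivity deliver the contradiction.
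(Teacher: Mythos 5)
Your overall strategy --- reduce to (essential) accessibility and derive a contradiction between a compact $su$-lamination and transitivity --- cannot work, because the theorem must cover transitive systems that are genuinely \emph{not} essentially accessible. Two specific problems. First, the implication ``not essentially accessible $\Rightarrow$ the union of non-open accessibility classes has positive Lebesgue measure'' is false: the set $\Gamma(f)$ of non-open classes is a codimension-one lamination (typically of measure zero), and the obstruction to essential accessibility can instead be a \emph{proper open} accessibility class of intermediate measure. Second, an $f$-invariant lamination (or foliation) by compact $su$-leaves does not contradict transitivity: the model case is an $AB$-system topologically conjugate to $A\times R_\theta$ on $N\times\mathbb{S}^1$ with $\theta$ irrational, which carries an invariant $su$-foliation by tori, is transitive, is ergodic, and is as far from essentially accessible as possible (every measurable union of fibers is $su$-saturated). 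Likewise, when there are finitely many $su$-tori, the complementary regions need not be $f$-invariant; they can be permuted cyclically, which is compatible with a dense orbit. So the contradiction you aim for in your last two steps never materializes.

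The paper's proof goes the other way: it \emph{accepts} the non-accessible cases and proves ergodicity directly in each. Using the classification (Theorem~\ref{NW}), the only case beyond accessible and DA is an infra-$AB$-system with a compact accessibility class. By Theorem~\ref{c-dim-one}, either there is an $su$-foliation by compact leaves --- in which case Hammerlindl's Theorem~\ref{Ham-2.4} gives conjugacy to $A\times R_\theta$ and transitivity forces $\theta$ irrational, hence ergodicity --- or there are finitely many periodic compact classes whose complement is a finite union of open accessibility classes $D_1,\dots,D_k$ permuted cyclically by $f$. Each $D_i$ is $f^k$-invariant and accessible for $f^k$, so the Hopf-argument machinery (\cite{HHU08invent,BW10,Brown22}, stated as Theorem~\ref{acc->erg}) gives ergodicity of $f^k|_{D_i}$ with respect to the normalized volume on $D_i$; since the $D_i$ all have measure $1/k$, a direct Birkhoff-average computation assembles these into ergodicity of $f$ on $M$. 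If you want to salvage your write-up, replace the contradiction argument with this case analysis; the ``main obstacle'' you identify (separating properties of $su$-leaves in the solvmanifold case) is not where the difficulty lies.
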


As a consequence of this theorem, the $C^2$-regularity condition in \cite[Theorem 1.3]{FU-noperiodic} can be relaxed to be $C^{1+\alpha}$, which shows the ergodicity for all conservative three-dimensional partially hyperbolic diffeomorphisms without periodic points.

We note that all conservative partially hyperbolic diffeomorphisms in the solvable case have been classified \cite{HP-sol} into three categories: DA diffeomorphisms, skew products, and discretized Anosov flows. However, ergodicity does not follow immediately from this classification. In addition to the examples mentioned above (the product of an Anosov diffeomorphism with the identity and the time-one map of a suspension flow), one can manually insert accessibility classes to break ergodicity. For a simple example of such a construction, consider firstly the product of an Anosov diffeomorphism with the identity. Then, by blowing up the origin of the circle along the 2-torus and inserting an open accessibility class, one obtains a non-ergodic diffeomorphism with both compact and non-compact accessibility classes. Similarly, one can destroy ergodicity by inserting open accessibility classes into the product of an Anosov diffeomorphism with an irrational rotation, thereby turning an ergodic example into a non-ergodic one. Note that the skew product structure is preserved in these constructions, which implies the coexistence of ergodic and non-ergodic skew product diffeomorphisms. Analogous constructions can be carried out for discretized suspension Anosov flows. Theorem~\ref{sol} shows that transitivity rules out such constructions. For a precise description of their accessibility classes, see Theorem~\ref{c-dim-one}.

A related problem to Conjecture~\ref{conj} is the Hertz–Hertz–Ures Ergodicity Conjecture, which states that all conservative three-dimensional partially hyperbolic diffeomorphisms without an $su$-torus are ergodic.  
We refer the reader to \cite{HHU08nil,HammerlindlUres,GS-DA,HamHU20,FP-adv,FP-ajm,FU-noperiodic,Feng25,FU-id} for previous developments on this conjecture. We emphasize that this conjecture has recently been completely resolved by \cite{FP-classify}.

\subsection{Classification of accessibility classes}

As suggested by the Pugh–Shub conjecture, accessibility serves as a powerful tool for establishing ergodicity. This has been verified for partially hyperbolic systems with a one-dimensional center \cite{HHU08invent} and under center-bunching conditions \cite{BW10}. From a purely topological perspective, it provides a way to strengthen dynamical recurrence from non-wandering to transitivity \cite{Brin75}. Moreover, accessibility enables the generalization of periodic obstructions to solving the cohomological equation and Livšic regularity problems in the partially hyperbolic context \cite{Wilkinson13}. Furthermore, its promising potential has been demonstrated in deriving significant properties of special ergodic measures, as detailed in \cite{HHTU12, VY13, AVW-flow, CP-invariance}, and in understanding centralizer groups \cite{DWX-centralizer, GSXZ22}. We refer the reader to \cite{Wilkinson10, CHHU18} for further discussion.

Regardless of volume preservation, accessibility has been proven to be a prevalent property among partially hyperbolic systems \cite{DolgopyatWilkinson03, BHHTU08, AV20}. This makes the study of the meager set of non-accessible systems more involved in the problem of determining which partially hyperbolic diffeomorphisms are accessible. A naturally arising question is to completely characterize all accessibility classes for non-accessible partially hyperbolic diffeomorphisms.

We provide a precise description on the set of compact accessibility classes as follows:

\begin{thm}\label{c-dim-one}
    Let $f: M\to M$ be a $C^{1}$ partially hyperbolic diffeomorphism of a closed manifold with $\dim E^c=1$. Assume that $f$ admits a compact accessibility class. If $f$ is transitive, then either
    \begin{enumerate}
        \item there exist finitely many compact accessibility classes, all of which are $f$-periodic; or
        \item there exists an $su$-foliation by compact leaves.
    \end{enumerate}
\end{thm}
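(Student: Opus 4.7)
The plan is to study the closed $f$-invariant set $\Lambda\subset M$ consisting of all non-open accessibility classes. Since $\dim E^{c}=1$, the accessibility machinery of Hertz--Hertz--Ures endows $\Lambda$ with a codimension-one lamination structure whose leaves are the individual non-open accessibility classes, realised as $C^1$ $su$-saturated submanifolds tangent to $E^{s}\oplus E^{u}$; in particular, compact accessibility classes coincide with compact leaves of $\Lambda$. The complement $M\setminus\Lambda$ is open, $f$-invariant, and equals the union of open accessibility classes, so transitivity forces the dichotomy: either $\Lambda=M$, or $\Lambda$ has empty interior, equivalently $M\setminus\Lambda$ is open and dense.

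\textbf{Case $\Lambda=M$.} Here the accessibility partition is already an $f$-invariant $su$-foliation of $M$ containing the hypothesised compact leaf $C_0$. I would prove every leaf compact (conclusion~(2)) by establishing that the set of points lying on a compact leaf is simultaneously open, closed, $f$-invariant, and non-empty. Openness would come from a Reeb-type local stability argument near $C_0$, whose key input is finite holonomy; I would deduce finiteness of the holonomy from $f$-invariance together with the existence of a dense orbit, since non-trivial holonomy along the transverse center direction would produce trapping regions for the center dynamics incompatible with transitivity. Closedness would follow from Hausdorff convergence of compact $su$-leaves, and $f$-invariance is automatic because $f$ permutes accessibility classes.

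\textbf{Case $\Lambda\subsetneq M$.} The goal is to show the set $\mathcal{K}$ of compact accessibility classes is finite; given finiteness, $f$ permutes $\mathcal{K}$ and conclusion~(1) follows immediately. Suppose for contradiction $\mathcal{K}$ is infinite. Picking distinct $C_n\in\mathcal{K}$ and $x_n\in C_n$ converging to some $x_\infty\in M$, the accessibility class $A$ of $x_\infty$ cannot be open (otherwise $x_n\in A$ for large $n$, contradicting $x_n\in C_n\neq A$), so $A\subset\Lambda$. In a lamination chart at $x_\infty$ the $C_n$ appear as nearby leaves of $\Lambda$ whose global closures are compact; propagating this accumulation leafwise along $A$ via lamination holonomy, I would extract an open saturated set of compact leaves within $\Lambda$, contradicting that $\Lambda$ has empty interior.

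\textbf{Main obstacle.} In both cases the hardest step is propagating local, chart-wise accumulation of compact leaves into a global structural statement, particularly ruling out a Reeb-component-like configuration in which compact leaves accumulate from one side on a non-compact leaf. This requires a careful holonomy analysis along compact leaves combined with $f$-invariance and the existence of a dense orbit; the absence of an a priori dynamically coherent center foliation makes the transverse analysis delicate and is the principal technical issue I expect to confront.
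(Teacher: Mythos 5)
There is a genuine gap, and it sits exactly where you flag the ``main obstacle.'' In both of your cases the argument hinges on upgrading \emph{accumulation} of compact leaves to an \emph{open} transversal set of compact leaves --- via finite holonomy / Reeb stability when $\Lambda=M$, and via ``extracting an open saturated set of compact leaves'' when $\Lambda\subsetneq M$. That upgrade is not available. Haefliger's theorem (which you are implicitly relying on when you assert closedness from ``Hausdorff convergence of compact $su$-leaves''; note this closedness is itself special to codimension one and is not automatic for laminations) only says the union of compact leaves is a compact sublamination; transversally it can perfectly well be a Cantor set, i.e.\ infinitely many compact leaves accumulating on one another with empty interior. So your case-2 contradiction (``open saturated set of compact leaves'' versus ``$\Lambda$ has empty interior'') never materializes. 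Your case-1 openness step is likewise unjustified, and in fact cannot hold in general: conclusion~(1) of the theorem explicitly permits finitely many compact classes to coexist with non-compact ones even when every accessibility class is non-open, so ``the set of points on compact leaves is open'' is simply false in some instances of your case $\Lambda=M$. Relatedly, your case split on whether the lamination of \emph{non-open} classes fills $M$ is misaligned with the dichotomy in the statement; the object to analyze is the sublamination of \emph{compact} classes.

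The paper closes exactly this gap with a global argument your sketch lacks. Let $\Lambda$ denote the union of all compact accessibility classes; by Haefliger it is a compact $f$-invariant sublamination of the codimension-one lamination of non-open classes. If $\Lambda\neq M$ and $\Lambda$ has infinitely many leaves, take a maximal connected complementary region $R$: a complementary region of a codimension-one lamination has only \emph{finitely many} boundary leaves $T_1,\dots,T_n$. Non-wandering returns some point of $\mathrm{int}(R)$ to $\mathrm{int}(R)$, so $R$ is $f^k$-invariant for some $k$; then $V=\bigcup_{j=1}^{k}f^j\bigl(\overline{R}\bigr)$ is closed, $f$-invariant, and has non-empty interior, whence $V=M$ by transitivity. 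But then every compact class is one of the finitely many $f^j(T_i)$, contradicting infinitude; hence either $\Lambda$ is finite (and its classes are permuted, giving periodicity) or $\Lambda=M$. It is this combination --- finitely many boundary leaves of a complementary region, recurrence to make the region periodic, and transitivity to make the orbit of its closure all of $M$ --- that rules out the Cantor-transversal configuration, and some version of it is what your proposal is missing.
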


Combining results in \cite{FP-ajm,FP-classify} (see Theorem~\ref{CAF}), we have the following consequence, giving a classification of accessibility classes in dimension three. See Section~\ref{subsec-AB} for the definition of infra-$AB$-systems.

\begin{cor}\label{transtive}
    Let $f: M\to M$ be a $C^{1}$ transitive partially hyperbolic diffeomorphism of a closed 3-manifold. Then, exactly one of the following cases occurs:
    \begin{enumerate}
        \item $f$ is accessible;
        \item there exist finitely many $su$-tori, all of which are $f$-periodic;
        \item there exists an $su$-foliation by tori;
        \item $f$ is a DA diffeomorphism with minimal $su$-foliation on $\mathbb{T}^3$. Moreover, if $f$ is $C^{1+\alpha}$, then it is an Anosov diffeomorphism.
    \end{enumerate}
    Furthermore, $f$ is an infra-$AB$-system in cases (2) and (3).
\end{cor}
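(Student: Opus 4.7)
The plan is a direct case analysis driven by two dichotomies: whether $f$ is accessible, and, in the non-accessible case, whether $f$ admits a compact accessibility class. The main work has already been done in Theorem~\ref{c-dim-one} (treating the compact case) together with the Fisher--Potrie structural results invoked as Theorem~\ref{CAF} (treating the non-compact case); what remains is to assemble them and identify the topology of the pieces.

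First, if $f$ is accessible we are in case~(1). Otherwise, since $\dim E^c=1$, non-open accessibility classes are $C^1$-immersed surfaces tangent to $E^s\oplus E^u$ and form a closed $f$-invariant $su$-lamination. I would then split according to whether this lamination contains a compact leaf. If some accessibility class is compact, Theorem~\ref{c-dim-one} produces precisely the two alternatives: finitely many compact accessibility classes, all $f$-periodic, or an entire $su$-foliation by compact leaves. A compact $su$-leaf in a closed 3-manifold is a closed surface tangent to $E^s\oplus E^u$ carrying two transverse one-dimensional foliations that are uniformly contracted and expanded by the dynamics restricted to the leaf; a standard Euler characteristic / expansivity argument (as used in Rodriguez~Hertz--Rodriguez~Hertz--Ures) forces such a surface to be a 2-torus. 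This yields cases~(2) and~(3), respectively.

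If no accessibility class is compact, I would invoke Theorem~\ref{CAF} from \cite{FP-ajm, FP-classify}: under transitivity and the absence of compact $su$-tori, $f$ must be a DA diffeomorphism on $\mathbb{T}^3$ whose $su$-foliation is minimal, which is case~(4). For the last clause, once $f$ is $C^{1+\alpha}$, Theorem~\ref{transitive<=>ergodic} provides ergodicity; combined with the minimality of the $su$-foliation and the Fisher--Potrie analysis of DA systems, this rules out the surgery that distinguishes a non-trivial DA diffeomorphism from a genuine Anosov one, forcing $f$ itself to be Anosov. The infra-$AB$ conclusion in cases~(2) and~(3) is then the classification obtained in \cite{FP-ajm} under the presence of $su$-tori, once transitivity secures the required recurrence on the torus lamination.

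The main obstacle in this assembly is twofold: confirming that compact $su$-leaves in dimension three must be 2-tori in case~(3), and correctly invoking the Fisher--Potrie classification to convert the absence of compact accessibility classes into the DA (and subsequently Anosov) structure in case~(4). Once these two inputs are safely in place, the corollary reduces to bookkeeping across the cases produced by the accessibility / compactness dichotomy.
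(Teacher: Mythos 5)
Your overall architecture matches the paper's: split on accessibility, then on the existence of a compact accessibility class, feed the compact case into Theorem~\ref{c-dim-one}, and identify compact $su$-leaves as tori via the one-dimensional unstable foliation they carry (Euler characteristic zero). That part is fine and is exactly what the paper does.

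The gap is in the non-compact, non-accessible case. You invoke Theorem~\ref{CAF} to conclude that ``under transitivity and the absence of compact $su$-tori, $f$ must be a DA diffeomorphism on $\mathbb{T}^3$ with minimal $su$-foliation,'' but Theorem~\ref{CAF} says nothing of the sort: its hypothesis is that $\pi_1(M)$ is \emph{not} virtually solvable, and under $NW(f)=M$ its conclusion there is accessibility. The DA alternative lives precisely in the complementary, virtually solvable (in fact virtually nilpotent, $M=\mathbb{T}^3$) regime, and extracting it requires the solvable-case classification of \cite{HP-sol}, the accessibility result of \cite{HHU08nil} off the 3-torus, the linearization argument via \cite{BI09}, and crucially Hammerlindl's dichotomy (Theorem~\ref{Ham-2.5}) that an $AB$-system is either accessible or has a \emph{compact} accessibility class --- this last point is what rules out a non-accessible infra-$AB$-system whose lamination $\Gamma(f)$ has no compact leaf, a possibility your case analysis silently omits. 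All of this is packaged in the paper as Theorem~\ref{NW}, which is the actual workhorse the proof of the corollary applies (transitivity gives $NW(f)=M$, then Theorem~\ref{NW} gives the trichotomy, then Theorem~\ref{c-dim-one} refines the $su$-torus case). The same mis-attribution affects your ``infra-$AB$ in cases (2) and (3)'' claim, which also comes from Theorem~\ref{NW} rather than from \cite{FP-ajm}. Finally, the $C^{1+\alpha}\Rightarrow$ Anosov upgrade in case (4) is not derived from ergodicity and ``ruling out the surgery''; it is the quoted input from \cite{GS-DA,HS-DA} that a non-accessible $C^{1+\alpha}$ DA diffeomorphism is Anosov. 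With Theorem~\ref{NW} substituted for your appeal to Theorem~\ref{CAF}, your argument closes and coincides with the paper's.
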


Under the condition $NW(f)=M$, \cite[Theorem 1.6]{HHU08nil} provides a classification of accessibility classes, including the case of an invariant lamination by non-compact accessibility classes. Theorem~\ref{transtive} implies that this lamination case cannot occur (see also Theorem~\ref{NW}). In the absence of periodic points, we have shown in \cite{FU-noperiodic2} that accessibility and the existence of an $su$-torus form a dichotomy without requiring $NW(f)=M$. Moreover, we provided a complete description of accessibility classes for partially hyperbolic diffeomorphisms with one-dimensional center in the absence of periodic points \cite{FU-noperiodic2}. 

We refer the reader to \cite[Corollary 1.5]{HS-DA} for the case with solvable fundamental group. Besides Theorem~\ref{c-dim-one}, the novelty of Corollary~\ref{transtive} also relies on the classification in \cite{HP-sol,FP-classify}, which requires $NW(f)=M$. Note that, following results from \cite{GS-DA,HS-DA}, the reduction from DA diffeomorphisms to Anosov diffeomorphisms in the last case of Corollary~\ref{transtive} relies on the $C^{1+\alpha}$ condition. It would be interesting to investigate whether this condition can be relaxed, which essentially requires checking the accessibility for non-Anosov DA diffeomorphisms.

\subsection{Organization}

The structure of this paper is as follows. In Section~\ref{section-pre}, we provide foundational concepts and preliminary results that will be used in subsequent sections. The proofs of Theorem~\ref{c-dim-one}, and Corollary~\ref{transtive} are included in Section~\ref{section-acc}. Section~\ref{section-erg} focuses on the equivalence between transitivity and ergodicity, providing the proof of Theorem~\ref{sol} and Theorem~\ref{transitive<=>ergodic}. In the Appendix, we include additional equivalence relations regarding chain transitivity.

\section{Preliminaries}\label{section-pre}

In this section, we introduce preliminary concepts and relevant results that will be used throughout the rest of the paper.  

\subsection{Partial hyperbolicity}

Let $M$ be a compact connected Riemannian manifold. We say a diffeomorphism $f: M\to M$ is \emph{(strongly pointwise) partially hyperbolic} if the tangent bundle of $M$ splits into three nontrivial invariant subbundles $TM=E^s \oplus E^c \oplus E^u$ such that for an adapted metric and for each $x\in M$ and each unit vectors $v^\sigma \in E^\sigma_x$ ($\sigma= s, c, u$), 
\begin{equation*}
	\|Df(x)v^s\|<1<\|Df(x)v^u\| \\
	\quad and \quad
	\|Df(x)v^s\|< \|Df(x)v^c\|< \|Df(x)v^u\|.
\end{equation*}

The existence of stable and unstable foliations plays a fundamental role in the study of Anosov systems. The unique integrability of the strong bundles for partially hyperbolic diffeomorphisms was established in the foundational work of \cite{BrinPesin74, PughShub72}. We denote by $\mathcal{F}^s$ the stable foliation of $f$ and refer to $\mathcal{F}^s(x)$ as the stable leaf through the point $x\in M$. Similarly, we use $\mathcal{F}^u$ and $\mathcal{F}^u(x)$ for the unstable foliation and leaves. A set is \emph{$s$-saturated} (resp. \emph{$u$-saturated}) if it is a union of stable (resp. unstable) leaves, and \emph{$su$-saturated} if it satisfies both saturation.  
The \emph{accessibility class} $AC(x)$ of a point $x\in M$ is the minimal $su$-saturated set containing $x$. Any pair of points lying in the same accessibility class can be connected by an \emph{$su$-path}, which consists of connected curves that are piecewise tangent to $E^s\cup E^u$.

A partially hyperbolic diffeomorphism $f$ is \emph{accessible} if $AC(x)=M$ for every $x\in M$; in other words, any two points can be joined by an $su$-path. Let $\Gamma(f)$ denote the set of non-open accessibility classes.  
Then $f$ is accessible if and only if $\Gamma(f)=\emptyset$.

\begin{thm}\cite{HHU08invent}\label{Gamma-f}
    Let $f:M\to M$ be a non-accessible partially hyperbolic diffeomorphism with one-dimensional center. Then $\Gamma(f)$ forms a codimension-one, compact, $f$-invariant lamination.
\end{thm}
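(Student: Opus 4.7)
The plan is to establish three properties of $\Gamma(f)$ in turn: $f$-invariance, closedness (hence compactness, since $M$ is compact), and the local lamination structure. The first two are essentially formal. For invariance, $f$ sends stable and unstable leaves to stable and unstable leaves, hence permutes accessibility classes; being a homeomorphism, it preserves the property of being open, so $\Gamma(f)$ is $f$-invariant. For closedness, the complement of $\Gamma(f)$ is the union of all open accessibility classes, which is itself open.

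The substantive content lies in the lamination structure. My strategy is to fix $x \in \Gamma(f)$ and analyze accessibility classes in a small neighborhood $U$ of $x$. Since $\dim E^c = 1$, I can choose a short $C^1$ arc $\tau$ through $x$ tangent to $E^c$, so $\tau$ is transverse to $E^s \oplus E^u$. Using local product charts for $\mathcal{F}^s$ and $\mathcal{F}^u$, each small closed $su$-loop based near $x$ induces a local holonomy homeomorphism on a neighborhood of $x$ in $\tau$; the collection of all such holonomies generates a pseudogroup $\mathcal{H}$ acting on $\tau$. The intersection $AC(x)\cap \tau$ is precisely the $\mathcal{H}$-orbit of $x$, and $AC(x)$ is open in $M$ if and only if this orbit contains an open arc around $x$.

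Hence $x \in \Gamma(f)$ exactly when the local $\mathcal{H}$-orbit of $x$ in $\tau$ fails to be an interval neighborhood of $x$. From this I would deduce that $AC(x)\cap U$ is an embedded codimension-one $C^1$ submanifold, realized as the $su$-saturation of the (at most countable, nowhere dense) orbit points in $\tau \cap U$. Applying the same construction to every $y \in \tau \cap \Gamma(f)$ near $x$ produces a local codim-one plaque $L_y$ through $y$; two such plaques either coincide (when $y,y'$ share an $\mathcal{H}$-orbit) or are disjoint. Patching these local plaques using the canonical nature of the construction yields the global lamination on $\Gamma(f)$, and $f$-invariance follows from the first step.

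The main obstacle will be establishing the $C^1$ regularity of each plaque and the continuous variation of plaques in the $C^1$ sense as one moves transversally along $\tau \cap \Gamma(f)$. Since stable and unstable manifolds are merely $C^1$ and their holonomies are generically only H\"older, one must argue that saturating a single orbit point produces a genuine $C^1$ submanifold tangent to $E^s\oplus E^u$, and that the tangent planes vary continuously in the transverse direction. I expect this to require combining the uniform continuity of the bundles $E^s$ and $E^u$ with careful estimates in local product coordinates. The fact that $\tau \cap \Gamma(f)$ may be a fairly wild compact subset of $\tau$ (for example a Cantor set) presents no additional difficulty once the per-leaf regularity and transverse continuity are in hand.
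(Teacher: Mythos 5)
The paper does not prove this statement; it is quoted verbatim from \cite{HHU08invent}, so there is no internal argument to compare against, and your attempt must be judged on its own. Your skeleton is right: $f$-invariance and closedness of $\Gamma(f)$ are indeed formal (the complement is the union of the open classes, hence open), and all the substance lies in the local plaque structure over a center transversal $\tau$. But there is a genuine gap at exactly the central step. You characterize $x\in\Gamma(f)$ by the condition that the local holonomy orbit of $x$ in $\tau$ is not a neighborhood of $x$, and then write that ``from this I would deduce that $AC(x)\cap U$ is an embedded codimension-one $C^1$ submanifold, realized as the $su$-saturation of the orbit points in $\tau\cap U$.'' That deduction \emph{is} the theorem. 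A priori the orbit of $x$ under the holonomy pseudogroup could accumulate at $x$ without containing an interval, in which case the ``plaque'' through $x$ would not be a single sheet; worse, the local $su$-saturation $\bigcup_{y\in W^u_{loc}(x)}W^s_{loc}(y)$ and the local $us$-saturation $\bigcup_{y\in W^s_{loc}(x)}W^u_{loc}(y)$ need not agree on $\tau$, and if they disagree then iterating the bracket sweeps out an interval of $\tau$ around $x$. The key lemma of Rodriguez Hertz--Rodriguez Hertz--Ures is precisely the dichotomy that rules this out when $\dim E^c=1$: either the two local saturations coincide and meet $\tau$ only at $x$ (producing a genuine codimension-one plaque), or $AC(x)$ has nonempty interior and is therefore open (a class with interior is open, since an $su$-path from an interior point to any other point of the class moves continuously with its starting point). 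Your proposal never proves this dichotomy, and without it the lamination structure does not follow from ``the orbit fails to contain an interval.''

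Two smaller points. The $C^1$ regularity you single out as the main obstacle is real, but it is not normally handled by estimating the (merely H\"older) stable and unstable holonomies; the standard route is to observe that a codimension-one topological submanifold that is simultaneously $s$- and $u$-saturated must have tangent plane $E^s\oplus E^u$ at every point, and a topological submanifold admitting a continuously varying tangent plane is $C^1$. Also, $AC(x)\cap U$ is in general a disjoint union of plaques rather than a single embedded submanifold (a single non-open class may meet $\tau$ in a countable dense subset of a Cantor set), so the local statement should be phrased for the plaque through $x$, not for the whole trace of the class in $U$.
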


The following recent result provides a classification of partially hyperbolic diffeomorphisms and their accessibility.
\begin{thm}\cite{FP-ajm,FP-classify}\label{CAF}
    Let $f: M\to M$ be a $C^1$ partially hyperbolic diffeomorphism of a closed 3-manifold whose fundamental group $\pi_1(M)$ is not virtually solvable. If $f$ is (chain) transitive, then $f$ is a strong collapsed Anosov flow. If $NW(f)=M$, then $f$ is accessible.
\end{thm}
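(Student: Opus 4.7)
The plan is to tackle the two conclusions separately, both built on Burago--Ivanov branching foliations together with the classification of closed 3-manifolds supporting partial hyperbolicity. Since $\pi_1(M)$ is not virtually solvable, the known topological restrictions (Parwani, Hammerlindl--Potrie, Carrasco--Hammerlindl--Potrie--Ures--Shi) force $M$ to be either a Seifert fibered space over a hyperbolic base orbifold, or a manifold with a non-trivial hyperbolic JSJ piece. In either case, apply Burago--Ivanov to obtain approximating branching foliations $\mathcal{W}^{cs}$ and $\mathcal{W}^{cu}$ tangent to $E^s\oplus E^c$ and $E^c\oplus E^u$, lift them to $\widetilde M$, and study the induced actions of $\pi_1(M)$ and of a lift $\widetilde f$ on the respective leaf spaces.

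First, for the collapsed Anosov flow conclusion. The key step is to show that $\widetilde{\mathcal{W}}^{cs}$ and $\widetilde{\mathcal{W}}^{cu}$ intersect in a coherent one-dimensional leaf family whose leaf space is isomorphic to the orbit space of a topological Anosov flow. Chain transitivity is used precisely to rule out dynamical attractors/repellors in this leaf space and to guarantee enough recurrent behavior so that the resulting bifoliated plane matches the Barthelm\'e--Fenley model for Anosov bifoliated planes. The associated $\pi_1(M)$-equivariant semiconjugacy descends to give an Anosov flow $\phi_t$ on $M$ (possibly after passing to a finite cover) together with a continuous map $h:M\to M$ homotopic to the identity carrying flow orbits into $f$-center leaves; this is the collapsed Anosov flow structure. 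Upgrading to a \emph{strong} collapsed Anosov flow requires a supplementary argument, again using transitivity, to exclude wandering center segments in the collapse, thereby promoting $h$ to a genuine semiconjugacy intertwining $f$ with a diffeomorphism commuting with $\phi_t$.

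For the accessibility statement, assume $NW(f)=M$ and, for contradiction, that $f$ is not accessible. By Theorem~\ref{Gamma-f}, $\Gamma(f)$ is a compact $f$-invariant codimension-one lamination. Push $\Gamma(f)$ forward through the semiconjugacy $h$ produced above: its image is a compact $\phi_t$-invariant lamination in $M$, i.e. a lamination saturated by flow orbits. Because the Anosov flow $\phi_t$ delivered by the first step is transitive with dense periodic orbits, such a lamination must either equal $M$ or sit inside the weak stable or weak unstable foliation of $\phi_t$. Each case contradicts the existence of open accessibility classes forced by the partial hyperbolicity of $f$ together with the non-wandering hypothesis, ruling out $\Gamma(f)\neq\emptyset$ and hence yielding accessibility.

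The main obstacle is constructing the topological Anosov flow from the branching foliations inside the universal cover. The Seifert fibered case and the hyperbolic JSJ case require genuinely different treatments: in the Seifert setting one uses the Seifert fibration to identify $\phi_t$ with (a lift of) a suspension-type flow, while in the presence of a hyperbolic piece one leans on the Barthelm\'e--Fenley--Frankel--Potrie framework to rule out exotic non-Hausdorff leaf-space configurations. The ``strong'' upgrade of the collapsed Anosov flow is the most delicate ingredient, since it must exclude collapse intervals purely from chain transitivity, without a priori control on the dynamics of $f$ along its center direction.
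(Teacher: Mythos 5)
This theorem is not proved in the paper at all: it is imported verbatim from \cite{FP-ajm,FP-classify}, so there is no internal proof to compare your attempt against. What you have written is a plausible reconstruction of the \emph{strategy} of those references (Burago--Ivanov branching foliations, lifting to $\widetilde M$, analysing the $\pi_1(M)$- and $\widetilde f$-actions on leaf spaces, producing a bifoliated plane modelled on an Anosov orbit space), but it is a roadmap rather than a proof. Every genuinely hard step is named and then deferred: the construction of the topological Anosov flow from the intersected branching foliations, the treatment of non-Hausdorff leaf spaces in the hyperbolic JSJ case, and above all the upgrade from ``collapsed Anosov flow'' to ``\emph{strong} collapsed Anosov flow,'' which you yourself flag as the most delicate ingredient without supplying the argument. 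These items constitute essentially the entire content of two long papers; acknowledging them as ``main obstacles'' does not discharge them.

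The accessibility half has a concrete gap beyond incompleteness. You push $\Gamma(f)$ through the collapse map $h$ and assert that the resulting compact $\phi_t$-invariant lamination ``must either equal $M$ or sit inside the weak stable or weak unstable foliation,'' and that ``each case contradicts the existence of open accessibility classes forced by the partial hyperbolicity of $f$ together with the non-wandering hypothesis.'' Neither claim is justified. A transitive Anosov flow can a priori admit flow-invariant laminations transverse to neither weak foliation (the image $h(\Gamma(f))$ need not be saturated by weak leaves, and $h$ is only a semiconjugacy, so the image need not even be a lamination in any useful sense where $h$ collapses intervals). More importantly, open accessibility classes are not ``forced'' by partial hyperbolicity plus $NW(f)=M$: if $\Gamma(f)=M$ then there are no open classes at all, and that case is not excluded by anything you wrote. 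The actual argument in \cite{FP-classify} runs through the strong collapsed Anosov flow structure in an essentially different way (analysing how $su$-saturated sets interact with the Anosov orbit foliation and using that an $su$-torus cannot exist when $\pi_1(M)$ is not virtually solvable), so you would need to either reproduce that analysis or close the case analysis you opened. As it stands, the proposal should be regarded as a correct identification of the relevant literature and techniques, not as a proof.
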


Combining this with Brin's result \cite{Brin75}, we obtain the following corollary:
\begin{cor}
    Let $f: M\to M$ be a $C^1$ partially hyperbolic diffeomorphism of a closed 3-manifold whose fundamental group $\pi_1(M)$ is not virtually solvable. Then, $NW(f)=M$ if and only if $f$ is transitive.
\end{cor}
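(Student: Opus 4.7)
The plan is straightforward because nearly all of the work is already encoded in the results cited just before the corollary. I will write the proof as a short two-way implication that assembles \cite{Brin75} with Theorem~\ref{CAF}.

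For the easy direction ($f$ transitive $\Rightarrow$ $NW(f)=M$), I would just invoke the general topological fact that on a compact metric space transitivity of a homeomorphism forces the whole manifold to be non-wandering: any point $x$ lies in the closure of a dense forward orbit, so any neighborhood $U \ni x$ meets $f^n(U)$ for infinitely many $n>0$. This requires no partial hyperbolicity at all and can be dispatched in one sentence.

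For the substantive direction ($NW(f)=M \Rightarrow f$ transitive), the plan is a two-step reduction. First, I apply the second clause of Theorem~\ref{CAF}: since $\pi_1(M)$ is not virtually solvable and $NW(f)=M$, we get accessibility, i.e.\ $AC(x)=M$ for every $x\in M$. Second, I invoke Brin's theorem from \cite{Brin75}, which says that an accessible partially hyperbolic diffeomorphism whose non-wandering set is the whole manifold is topologically transitive. Combining the two steps closes the implication.

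There is essentially no obstacle here beyond correctly citing the inputs; the content lies entirely in Theorem~\ref{CAF} and in Brin's accessibility-plus-recurrence criterion. The only thing worth a line of care is to note that accessibility of a $C^1$ partially hyperbolic diffeomorphism with one-dimensional center (or more generally in the setting of \cite{Brin75}) together with $NW(f)=M$ does give transitivity without any additional hypotheses on the topology of $M$, so the non-virtual-solvability assumption is used only in the first step. I would conclude the proof with a one-line remark that the equivalence fails to hold in general without the topological assumption on $\pi_1(M)$, as illustrated by the standard non-ergodic-but-non-wandering examples (products of Anosov with identity, time-one map of a suspension flow) mentioned earlier in the introduction, which live on solvable manifolds.
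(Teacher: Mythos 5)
Your proposal is correct and is essentially the paper's intended argument: the corollary is stated as a direct combination of Theorem~\ref{CAF} (which upgrades $NW(f)=M$ to accessibility when $\pi_1(M)$ is not virtually solvable) with Brin's criterion that accessibility plus $NW(f)=M$ yields transitivity, while the converse direction is the standard fact that transitivity forces every point to be non-wandering. Nothing further is needed.
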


It would be interesting to investigate whether the condition $NW(f)=M$ in the corollary above can be relaxed to chain-recurrence, although a counterexample exists on the 3-torus \cite{GanShi-chaintransitive}.

\subsection{$AB$-systems}\label{subsec-AB}

Let us recall a typical class of partially hyperbolic diffeomorphisms with one-dimensional center, known as $AB$-systems, which were originally introduced in \cite{Hammerlindl17}. This class shares certain features with skew products but is more general in several respects. 

Skew products constitute a typical class of diffeomorphisms that fiber over a lower-dimensional system. We say that a diffeomorphism $f:M\to M$ of a compact manifold is a \emph{skew product over a map $A:N\to N$} if it preserves the fibration $\pi:M\to N$ and the projection map $A$ satisfies $\pi\circ f=A\circ\pi$, where $N$ is a topological manifold of dimension lower than that of $M$. Note that the fibration of a skew product is not necessarily a trivial bundle or even a principal bundle. In the partially hyperbolic context, one usually considers skew products whose base maps are hyperbolic. 

Given two automorphisms $A$ and $B$ acting on a compact nilmanifold $N$ such that $A$ is hyperbolic and $AB=BA$, an \emph{$AB$-prototype} is defined as the diffeomorphism  
$$f_{AB}: M_B\to M_B, \quad (v,t)\mapsto (Av, t)$$  
on the manifold  
$$M_B=N\times \mathbb{R}\,/\,(v,t)\sim(Bv,t-1).$$  
It is clear that $f_{AB}$ is a partially hyperbolic diffeomorphism, and the fibration $\pi: M_B\to N$ corresponds to the integral foliation of its one-dimensional center bundle.

\begin{defn}
    A partially hyperbolic diffeomorphism $f:M\to M$ is an \emph{AB-system} if 
    \begin{enumerate}
        \item it preserves the orientation of the center bundle $E^c$;
        \item $f$ is leaf conjugate to an $AB$-prototype $f_{AB}$ by a homeomphism $h$; and
        \item $h\circ f\circ h^{-1}$ is homotopic to $f_{AB}$.
    \end{enumerate}
\end{defn}

A broad class of examples consists of skew products over an Anosov diffeomorphism on a nilmanifold. Under the orientation-preserving condition, $AB$-systems can be viewed as a generalization of skew products with a trivial one-dimensional fiber bundle. However, $AB$-systems also encompass other classes of partially hyperbolic systems; for instance, the case where $B$ itself is hyperbolic. 

The $AB$-prototype $f_{AB}$ serves as a linear model for $AB$-systems, as they share the same conjugacy and homotopy classes up to center leaves. For a given $f_{AB}$, all $AB$-systems may exhibit diverse dynamical behaviors, particularly in the center direction. It is worth noting that the ambient manifold $M$ of an $AB$-system is only guaranteed to be homeomorphic to $M_B$, not necessarily diffeomorphic \cite{FarrellJones,FarrellGogolev}. In general, the center bundle $E^c$ is not necessarily orientable; and even if it is, the partially hyperbolic diffeomorphism may not preserve a given orientation. Moreover, condition (3) in the definition above is not redundant—see the \cite[Erratum]{Hammerlindl17} for examples that are not homotopic. To accommodate these subtleties as well as skew products on infra-nilmanifolds, we may consider the following more general class.

A diffeomorphism $f$ is called an \emph{infra-$AB$-system} if some iterate of $f$ lifts to an $AB$-system on a finite cover. The following lemma implies that a partially hyperbolic diffeomorphism satisfying condition (2) is, in fact, an infra-$AB$-system.

\begin{lem}
    If $f$ is leaf conjugate to an $AB$-prototype $f_{AB}$, then an iterate $\hat{f}^k$ of a lift to a finite cover is an $AB$-system (i.e., $f$ is an infra-$AB$-system).
\end{lem}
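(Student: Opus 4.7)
The plan is to produce a finite cover $\widetilde{M}\to M$ together with an iterate $\widehat{f}=\widetilde{f}^{\,k}$ of a lift of $f$ that fulfils all three defining conditions of an $AB$-system, starting from the given leaf conjugacy $h:M\to M_B$. Condition~(2) is easy to inherit under compatible covers: since $M_B$ is the mapping torus of $B:N\to N$, any refinement obtained by unfolding the $\mathbb{R}$-direction of the mapping torus, or by an $A$-invariant finite cover of the nilmanifold $N$, is itself the mapping torus of a commuting hyperbolic automorphism pair, and hence again an $AB$-prototype. So pulling back $h$ along such a cover automatically preserves~(2), and what remains is to arrange conditions~(1) and~(3).

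For condition~(1), I would pass to the (at most two-fold) cover of $M$ on which the pull-back of the line bundle $E^c$ is orientable; if the corresponding lift of $f$ reverses this orientation, I replace it by its square. The matching operation on the $M_B$-side is a degree-two cover of the mapping torus together with $f_{AB}^2$, which is still an $AB$-prototype, so~(2) survives.

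The delicate point is condition~(3). After the previous reduction, let $g:=h\circ f\circ h^{-1}$ and set $\phi:=g\circ f_{AB}^{-1}$. Since $h$ is a leaf conjugacy and both $g$ and $f_{AB}$ descend to the same hyperbolic automorphism $A$ on the quotient $N=M_B/\mathcal{F}^c$, the map $\phi$ preserves each center leaf of $f_{AB}$ setwise and restricts to an orientation-preserving self-homeomorphism of it. On non-compact center leaves this restriction is homotopic to the identity by a straight-line homotopy; on compact (circular) leaves it is homotopic to a rotation. I would then globalise these leafwise homotopies into a single ambient homotopy of $\phi$ to the identity, after a further finite cover and iterate, which yields $h\circ f\circ h^{-1}\simeq f_{AB}$ and hence~(3).

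The main obstacle is precisely this globalisation. The rotation data on the circular center leaves assembles into a section-like object on the dense set of $B$-periodic points of $N$, and the obstruction to trivialising $\phi$ through fibre-preserving maps should live in a finitely generated cohomological invariant of $M_B$ coming from the mapping class group of the mapping torus (or equivalently, from the induced outer automorphism of $\pi_1(M_B)\cong\pi_1(N)\rtimes_B\mathbb{Z}$). Once this invariant is shown to have bounded torsion in the appropriate sense, the finite cover absorbing it together with the iterate killing its cyclic part will conclude~(3). Making this obstruction-theoretic step rigorous — in particular, identifying the correct finite-index subgroup of $\pi_1(M_B)$ that trivialises the twist and matching it with a geometric cover — is the crux of the argument.
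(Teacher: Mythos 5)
Your reduction for conditions (1) and (2) — pass to the orientation double cover of $E^c$, take an iterate of a lift that preserves the orientation, and observe that the leaf conjugacy pulls back to a leaf conjugacy with another $AB$-prototype — is exactly what the paper does. The problem is condition (3), which is the entire content of the lemma (the paper itself stresses that (3) is not redundant, citing examples from the Erratum of \cite{Hammerlindl17} where a map satisfying (1) and (2) is \emph{not} homotopic to its prototype). You correctly identify this as the crux, but you do not prove it: your argument ends by asserting that the obstruction "should live in a finitely generated cohomological invariant" with "bounded torsion," and that some unspecified finite cover and iterate will absorb it. None of this is established. You do not identify the invariant, you do not show it is torsion (or otherwise killable by a finite cover), and you do not explain how passing to an iterate interacts with passing to a cover at the level of the induced outer automorphism of $\pi_1(M_B)\cong\pi_1(N)\rtimes_B\mathbb{Z}$. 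Moreover, the intermediate claim that the leafwise homotopies of $\phi=g\circ f_{AB}^{-1}$ to the identity (straight lines on non-compact leaves, rotations on circle leaves) can be "globalised into a single ambient homotopy" is precisely the point at issue; leafwise null-homotopies do not in general assemble continuously across leaves, and this is where the genuine homotopy-theoretic obstruction lives.

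The paper does not carry out this analysis either: it disposes of condition (3) in one line by invoking Proposition 2 of the Erratum to \cite{Hammerlindl17}, which states exactly that a partially hyperbolic diffeomorphism satisfying (1) and (2) admits a further finite lift satisfying all three conditions. So the honest comparison is: your proof matches the paper on the easy half and replaces the citation on the hard half with an unexecuted plan. As written, the proposal has a genuine gap at condition (3); to close it you would either need to cite the Erratum as the paper does, or actually compute the obstruction in $[M_B,M_B]$ (equivalently, in $\mathrm{Out}(\pi_1(M_B))$, since $M_B$ is aspherical) and verify that it dies after a finite cover.
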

\begin{proof}
    Since the center bundle $E^c$ is one-dimensional, we may pass to a double cover of $M$ to make $E^c$ orientable. After fixing an orientation, there exists a finite iterate $\hat{f}^n$ of a lift to this double cover that preserves the orientation of $E^c$. Then $\hat{f}^n$ is a partially hyperbolic diffeomorphism satisfying conditions (1) and (2). By \cite[Proposition 2 of Erratum]{Hammerlindl17}, one can further lift $\hat{f}^n$ to a finite cover so that all three conditions are satisfied. Hence, $f$ is an infra-$AB$-system.
\end{proof}

The accessibility classes of $AB$-systems are well understood, as described in the following result:
\begin{thm}\cite[Theorem 2.5]{Hammerlindl17}\label{Ham-2.5}
    Every $AB$-system is either accessible or possesses a compact accessibility class.
\end{thm}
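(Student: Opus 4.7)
The plan is to project the accessibility structure of $f$ onto the center leaf space of the prototype $f_{AB}$, exploit the algebraic rigidity of the hyperbolic nilautomorphism $A$, and then use the resulting covering picture to extract a compact accessibility class.

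First, I would identify the center leaf space of $f_{AB}$ with $N/\langle B\rangle$, noting that on the cover $N\times\mathbb{R}$ the center leaves are the vertical lines $\{v\}\times\mathbb{R}$. The leaf conjugacy $h$ then induces a continuous surjection $\pi\colon M\to N/\langle B\rangle$ whose fibers are the center leaves of $f$, and which semi-conjugates $f$ to the action induced by $A$. The key preliminary step is to show that $\pi$ maps $\mathcal{F}^s(f)$ and $\mathcal{F}^u(f)$ into $\mathcal{F}^s(A)$ and $\mathcal{F}^u(A)$; this is where condition (3) of the definition enters, via a Franks--Manning style universal-cover comparison between lifts of $\mathcal{F}^{s,u}(f)$ and $\mathcal{F}^{s,u}(f_{AB})$ using the homotopy $h\circ f\circ h^{-1}\simeq f_{AB}$.

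Next, assume $f$ is not accessible, so that by Theorem~\ref{Gamma-f} the set $\Gamma(f)$ is a non-empty compact $f$-invariant codimension-one lamination. Each leaf $L$ of $\Gamma(f)$ is an accessibility class tangent to $E^s\oplus E^u$, hence transverse to $\mathcal{F}^c$, which makes $\pi|_L$ a local homeomorphism into $N/\langle B\rangle$. Now use the fact that $A$ is accessible on $N$: since $A$ has no eigenvalue of modulus one, the stable and unstable Lie subalgebras span all of $\mathrm{Lie}(N)$ as a vector space, and therefore generate $N$ as a closed subgroup. Consequently the only closed $A$-invariant $s,u$-saturated subset of $N/\langle B\rangle$ is the full base. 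Since $\pi(L)$ is closed, $s,u$-saturated, and invariant under some iterate of $A$, we conclude $\pi(L)=N/\langle B\rangle$, so $\pi|_L\colon L\to N/\langle B\rangle$ is a covering. Combining the compactness of $\Gamma(f)$ with the transverse regularity of the lamination forces this covering to have finite degree, and therefore $L$ is compact.

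The principal obstacle is the control of $\mathcal{F}^{s,u}(f)$ under the leaf conjugacy $h$, which by definition preserves only center leaves. Overcoming this requires lifting to the universal cover and using the homotopy relation in condition (3), together with coarse geometric rigidity of the nilmanifold base, to show that $\tilde{\mathcal{F}}^{s,u}(f)$ remain within bounded Hausdorff distance of $\tilde{\mathcal{F}}^{s,u}(f_{AB})$ and thus project correctly onto $\mathcal{F}^{s,u}(A)$. A secondary technical point is verifying that $\pi|_L$ is a \emph{finite}-sheeted cover rather than an infinite one: this is where the lamination structure of $\Gamma(f)$ is essential, as an infinite-degree covering leaf would necessarily accumulate on itself or on other leaves in a manner incompatible with the transverse regularity of a compact codimension-one lamination in $M$. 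A final orbifold-flavored subtlety, when $B$ acts on $N$ with fixed points, is handled by passing to a finite cover, consistent with the infra-$AB$-system framework.
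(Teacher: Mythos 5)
This statement is quoted by the paper from \cite[Theorem 2.5]{Hammerlindl17}; the paper gives no proof of it, so your attempt can only be measured against Hammerlindl's original argument. Your overall strategy --- push the accessibility structure down to the base using the leaf conjugacy and the homotopy condition, use hyperbolicity of $A$ to show a non-open accessibility class meets every center leaf, and then conclude compactness --- is the right general shape, but two steps do not hold up.

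First, the space you project to is wrong. The center leaf space of $f_{AB}$ is indeed $N/\langle B\rangle$, but when $B$ has infinite order (e.g.\ when $B$ is itself hyperbolic, a case explicitly allowed by the definition) this quotient is badly non-Hausdorff, since $\langle B\rangle$-orbits can be dense in $N$. Statements such as ``$\pi|_L$ is a covering of $N/\langle B\rangle$'' and ``a finite-degree cover of $N/\langle B\rangle$ is compact'' are then meaningless. The workable setting is the intermediate cover $N\times\mathbb{R}$ of $M_B$, where the center foliation is the vertical foliation and one projects a lift of $L$ to the compact nilmanifold $N$; the deck transformation $(v,t)\mapsto(Bv,t-1)$ must then be tracked explicitly.

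Second, and more seriously, the compactness step is exactly where the content of the theorem lies, and your argument for it is incorrect. From saturation and the fact that $su$-accessibility classes of the linear model on $N$ are all of $N$, you can conclude that $L$ meets every center leaf; transversality gives that $L\cap\mathcal{F}^c(x)$ is discrete in $L$. But it does not follow that this intersection is finite, and your claim that an infinite-degree covering leaf ``would accumulate in a manner incompatible with the transverse regularity of a compact codimension-one lamination'' is false: compact codimension-one laminations routinely contain non-compact leaves that accumulate on themselves or on other leaves and meet transversals in infinite (even dense) sets --- indeed $\Gamma(f)$ for a general partially hyperbolic diffeomorphism can a priori be such a lamination, which is precisely why Theorem~\ref{NW} and \cite[Theorem 1.6]{HHU08nil} must list the minimal-$su$-foliation case separately. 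Hammerlindl's proof closes this gap by working on the universal cover with the global product structure of the lifted foliations and an ordering argument along center leaves, showing that a lifted non-open accessibility class intersects each lifted center leaf in \emph{exactly one} point, hence is the graph of an equivariant continuous function over $\widetilde{N}$ and descends to a compact set homeomorphic to $N$. Without an argument of this kind your proof establishes only that $L$ surjects onto the base, not that $L$ is compact.
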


\section{Accessibility classes}\label{section-acc}

This section is devoted to the characterization of accessibility classes for partially hyperbolic diffeomorphisms with one-dimensional center. We include proofs of Theorem~\ref{c-dim-one} and Corollary~\ref{transtive}.

The following result provides a description of accessibility classes and non-accessible partially hyperbolic diffeomorphisms in dimension three.

\begin{thm}\label{NW}
    Let $f: M\to M$ be a $C^{1}$ partially hyperbolic diffeomorphism of a closed 3-manifold with $NW(f)=M$. Then, we have a trichotomy:
    \begin{enumerate}
        \item $f$ is accessible;
        \item $f$ is an infra-$AB$-system admitting an $su$-torus;
        \item $f$ is a DA diffeomorphism with minimal $su$-foliation on $\mathbb{T}^3$. Moreover, if $f$ is $C^{1+\alpha}$, then it is an Anosov diffeomorphism.
    \end{enumerate}
\end{thm}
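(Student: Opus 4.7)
The plan is to perform a case analysis on $\pi_1(M)$ and then invoke existing classifications on each side. When $\pi_1(M)$ is not virtually solvable, Theorem~\ref{CAF} immediately yields case (1), since the hypothesis $NW(f)=M$ directly forces accessibility. So I may assume $\pi_1(M)$ is virtually solvable throughout the remainder of the argument.

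In the solvable setting, the first step is to invoke the Hammerlindl--Potrie classification \cite{HP-sol} (observing that its topological arguments extend from the conservative hypothesis to the weaker $NW(f)=M$), which places $f$ in one of three families: a DA diffeomorphism on $\mathbb{T}^3$, a skew product, or a discretized Anosov flow. For skew products over hyperbolic base maps and for discretized Anosov flows, I would verify that $f$ is leaf conjugate to an $AB$-prototype, which makes it an infra-$AB$-system by the lemma preceding Theorem~\ref{Ham-2.5}. In these two sub-cases, Theorem~\ref{Ham-2.5} then yields either accessibility (case (1)) or a compact accessibility class. A compact $su$-saturated accessibility class in a closed 3-manifold is an embedded surface carrying two transverse one-dimensional foliations $\mathcal{F}^s$ and $\mathcal{F}^u$, so an Euler-characteristic argument identifies it as a 2-torus, placing us in case (2).

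For DA diffeomorphisms on $\mathbb{T}^3$, if $f$ is accessible we are in case (1); otherwise, by Theorem~\ref{Gamma-f}, $\Gamma(f)$ is a nontrivial codimension-one, $f$-invariant, compact lamination. I would then split according to whether $\Gamma(f)$ contains a compact leaf. If it does, such a leaf is an $su$-torus (again by the Euler-characteristic argument), and comparison with the linearization of the DA then promotes $f$ to an infra-$AB$-system, giving case (2). If $\Gamma(f)$ contains no compact leaf, then consistency with $NW(f)=M$ and the DA topology on $\mathbb{T}^3$ forces $\Gamma(f)=M$, so $\mathcal{F}^s$ and $\mathcal{F}^u$ jointly determine a minimal $su$-foliation, giving case (3). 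The regularity upgrade to an Anosov diffeomorphism in case (3) under the $C^{1+\alpha}$ hypothesis then follows from \cite{GS-DA, HS-DA}.

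The principal obstacle is the DA case: ruling out laminations that are neither collections of tori nor all of $M$ (for instance, strict sublaminations with only non-compact leaves) requires a careful topological argument combining $f$-invariance with the linear model. Likewise delicate, though plausibly routine, is checking that the Hammerlindl--Potrie classification genuinely uses only $NW(f)=M$ and not full volume preservation, since their arguments rely on global product structure and non-wandering recurrence rather than any measure-theoretic input.
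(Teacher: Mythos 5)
Your overall strategy coincides with the paper's: split on whether $\pi_1(M)$ is virtually solvable, dispose of the non-solvable case by Theorem~\ref{CAF}, treat the solvable non-toral cases via the $AB$-system machinery (Theorem~\ref{Ham-2.5} plus the Euler-characteristic identification of a compact accessibility class as a 2-torus), and isolate the DA case on $\mathbb{T}^3$. The paper organizes the $\mathbb{T}^3$ case slightly differently, by the middle eigenvalue $\lambda_2$ of the linearization $L$: if $|\lambda_2|=1$ then $f$ is leaf conjugate to a skew product over a toral Anosov automorphism, hence an $AB$-system, and one is back in the Theorem~\ref{Ham-2.5} dichotomy; only when $|\lambda_2|\neq 1$ is $f$ a genuine DA diffeomorphism. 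This eigenvalue split does the work you assign to the Hammerlindl--Potrie families, and it makes the DA case cleaner.

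There is, however, a genuine gap in your DA case, and it is exactly the step you flag as ``the principal obstacle'' without resolving. First, your compact-leaf sub-case is handled incorrectly: a DA diffeomorphism on $\mathbb{T}^3$ cannot be promoted to an infra-$AB$-system, since its action on $H_1(\mathbb{T}^3)$ is hyperbolic while that of any $AB$-prototype on $\mathbb{T}^3$ is not, so the homotopy (hence leaf-conjugacy) classes are incompatible; the correct conclusion from ``comparison with the linearization'' is that an $su$-torus cannot exist at all in the DA case (an embedded $su$-torus is incompressible and forces $|\lambda_2|=1$), i.e.\ that sub-case is vacuous rather than an instance of case (2). Second, and more importantly, the assertion that ``no compact leaf plus $NW(f)=M$ forces $\Gamma(f)=M$ and minimality of the resulting $su$-foliation'' is precisely \cite[Theorem 1.6]{HHU08nil}; the paper closes the argument by citing that result (together with \cite{HS-DA} for the $C^{1+\alpha}$ upgrade to Anosov), whereas your proposal acknowledges the difficulty but supplies neither a proof nor a reference. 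As written, the DA branch of your trichotomy is therefore incomplete. Your side concern about whether \cite{HP-sol} needs conservativity is not an issue: the classification there is topological and applies under $NW(f)=M$ (indeed to all partially hyperbolic diffeomorphisms on such manifolds), which is how the paper uses it.
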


This theorem essentially follows from the classification given by Theorem~\ref{CAF} and \cite{HP-sol}. We sketch its proof for the sake of completeness.

\begin{proof}
    After possibly passing to a finite cover and an iterate, we may assume that $M$ and the bundle $E^c$ are orientable, with orientations preserved by $f$. There is no loss of generality because each possibility listed in the desired trichotomy persists under finite lifts and iterates. When the fundamental group $\pi_1(M)$ is not virtually solvable, the diffeomorphism $f$ must be accessible by Theorem~\ref{CAF}.

    The result in \cite{HHU08nil} establishes that $f$ is accessible provided that $\pi_1(M)$ is virtually nilpotent but $M$ is not the 3-torus. If $\pi_1(M)$ is virtually solvable but not virtually nilpotent, then $f$ has an iterate that is a discretized Anosov flow \cite{HP-sol} and $M$ is finitely covered by a torus bundle over the circle \cite{Parwani10}. In this case, after taking a finite cover and an iterate if necessary, $f$ is an $AB$-system, which by Theorem~\ref{Ham-2.5} is either accessible or has a compact submanifold tangent to $E^s\oplus E^u$. Any such compact submanifold in our setting must be a 2-torus, as it admits a one-dimensional unstable foliation. This yields the first two possibilities in the trichotomy when $M$ is not the 3-torus.

    When the manifold $M$ is $\mathbb{T}^3$ up to a finite lift, the diffeomorphism $f$ induces an action $f_*:H_1(M,\mathbb{Z})\to H_1(M, \mathbb{Z})$ on the first homology group $H_1(M,\mathbb{Z})\cong \mathbb{Z}^3$. There is a unique linear automorphism $L:\mathbb{T}^3\to \mathbb{T}^3$ inducing the same action as $f_*$, called the \emph{linearization} of $f$. The map $L$ is partially hyperbolic \cite{BI09} with three distinct eigenvalues $|\lambda_1|<|\lambda_2|<|\lambda_3|$, where $|\lambda_1|<1<|\lambda_3|$. If $|\lambda_2|=1$, then $f$ is leaf conjugate to a skew product over an Anosov automorphism on $\mathbb{T}^2$ and hence is an $AB$-system. Then, Theorem~\ref{Ham-2.5} implies that $f$ is either accessible or has a compact accessibility class. As noted above, such a compact accessibility class is an $su$-torus. If $|\lambda_2|\neq 1$, then the linear map $L$ is an Anosov automorphism sharing the same homotopy class as $f$. Thus, $f$ is a DA diffeomorphism. Moreover, it admits a minimal $su$-foliation by \cite[Theorem 1.6]{HHU08nil} since $su$-torus cannot exist. By \cite{HS-DA}, a non-accessible $C^{1+\alpha}$ DA diffeomorphism must be Anosov.

    This completes the proof.
\end{proof}

Before proceeding with the proof of Theorem~\ref{c-dim-one}, we first assume it to establish Corollary~\ref{transtive}.

\begin{proof}[Proof of Corollary~\ref{transtive}]
    Transitivity implies the condition $NW(f)=M$ required in Theorem~\ref{NW}. Then we directly apply Theorem~\ref{NW} to conclude that $f$ is either accessible or a DA diffeomorphism on $\mathbb{T}^3$, provided that there is no $su$-torus. If an $su$-torus exists, it is a compact accessibility class. By Theorem~\ref{c-dim-one}, either there are only finitely many compact accessibility classes, all of which are $f$-periodic, or $f$ preserves an $su$-foliation by compact leaves. Any such compact accessibility class must be a 2-torus, as it is foliated by one-dimensional unstable manifolds. This completes the proof.
\end{proof}

For a general lamination, the collection of its compact leaves is not necessarily a compact set, whereas this property holds when its codimension is one.

\begin{thm}\label{Hae62}\cite{Hae62}  
    The set of compact leaves of a $C^0$ codimension-one lamination forms a compact sublamination.  
\end{thm}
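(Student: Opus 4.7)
Plan: Write $K = \bigcup\{L : L$ is a compact leaf of $\mathcal{L}\}$. Since $K$ is by construction a union of leaves, what remains is to show that $K$ is closed in $M$, and I would prove this equivalently by showing that $M \setminus K$ is open. Because a leaf of $\mathcal{L}$ is compact if and only if it is a closed subset of the compact manifold $M$, the claim to establish is: if the leaf $L$ through $x$ is not closed, then neither is any leaf $L'$ through a point $x'$ near $x$.

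The failure of $L$ to be closed produces a point $y \in \overline{L} \setminus L$. I would fix a foliation chart $V \cong D \times T$ at $y$ with $T \subset \mathbb{R}$ the transverse arc — this is where the codimension-one hypothesis enters, ensuring the local transversal is one-dimensional and hence totally ordered. Because leaves of $\mathcal{L}$ are pairwise disjoint and $y$ lies on a leaf $L_y \neq L$, the plaques of $L$ in $V$ must accumulate on the plaque of $L_y$ through $y$: there exist parameters $t_k \in T$, corresponding to $L$-plaques, converging to the parameter $t_\ast$ of the $L_y$-plaque. I would then pick $z_k \in L$ on these plaques, so that $z_k \to y$.

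The next step is to propagate this accumulation picture to any nearby leaf $L'$ through an $x'$ close to $x$. For each $k$, I would fix a path in $L$ from $x$ to $z_k$ and cover it by finitely many foliation boxes; continuity of plaque-tracking along a compact arc in a $C^0$ lamination then produces a plaque of $L'$ in $V$ at a parameter $t'_k$ near $t_k$, for all $x'$ in some neighborhood $U_k$ of $x$. The main obstacle lies in converting these pointwise-in-$k$ neighborhoods into a single neighborhood $U$ of $x$ on which tracking succeeds for infinitely many $k$ simultaneously. To overcome this, I would follow Haefliger's strategy and reformulate the problem in terms of the holonomy pseudogroup $\mathcal{H}$ acting on a local transversal through $x$: compactness of $M$ makes $\mathcal{H}$ compactly generated, compactness of a leaf corresponds to finiteness of the associated $\mathcal{H}$-orbit, and the order-preservation of $\mathcal{H}$ on the one-dimensional transversal allows one to show that if one orbit accumulates then every nearby orbit does. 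Applied to the orbits parameterizing $L$ and $L'$, this forces $L'$ to fail to be closed, establishing that $M \setminus K$ is open.
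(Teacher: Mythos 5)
The paper does not actually prove this statement --- it is quoted from Haefliger \cite{Hae62}, with the remark that the foliation case extends to laminations as in \cite{HectorHirsch} --- so your attempt has to be judged on its own terms. Your setup is sound: reducing the claim to openness of the set of points lying on non-closed leaves, producing an accumulation point $y\in\overline{L}\setminus L$ with plaque parameters $t_k\to t_*$, and propagating finitely many plaques of $L$ to plaques of a nearby leaf $L'$ along compact arcs are all correct steps. You also correctly identify the real obstacle: the neighborhood $U_k$ on which the $k$-th plaque-tracking is defined may shrink as $k\to\infty$, because the plaque chains from $x$ to $z_k$ have unbounded length, so for a fixed $x'$ you only obtain finitely many plaques of $L'$ this way.

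The step you offer to overcome this obstacle, however, is not an argument but a restatement of the theorem. In pseudogroup terms the theorem \emph{is} the assertion that the union of finite orbits of the holonomy pseudogroup $\mathcal{H}$ on a one-dimensional transversal is closed, i.e.\ precisely that ``if one orbit accumulates then every nearby orbit does''; declaring that compact generation and order-preservation ``allow one to show'' this is circular. Order-preservation of a single generator does make its periodic set closed, but finiteness of an $\mathcal{H}$-orbit is not detected by a single element, and the passage from the order structure on the transversal to closedness of the set of finite orbits is exactly the nontrivial content of Haefliger's theorem. It is also exactly where codimension one is indispensable: the statement fails in codimension $\geq 2$ (the periodic orbits of a transitive Anosov flow form a dense, non-closed union of compact leaves of the codimension-two orbit foliation), so any proof must extract more from the order structure than you do. A complete argument has to supply this missing step --- for instance via the analysis of one-sided holonomy and saturated neighborhoods of compact leaves as in \cite{HectorHirsch} --- whereas your proposal defers the entire difficulty to an unproved claim equivalent to the theorem itself.
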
  

We remark that this property was originally stated for foliations and, in fact, it also holds for laminations; see for instance \cite{HectorHirsch}.

Now, let us present our proof of Theorem~\ref{c-dim-one}.

\begin{proof}[Proof of Theorem~\ref{c-dim-one}]
    Denote by $\Gamma(f)$ the set of non-open accessibility classes. By Theorem~\ref{Gamma-f}, $\Gamma(f)$ is a codimension-one compact $f$-invariant lamination. Let $\Lambda$ be the set of all compact accessibility classes. Then, by Theorem~\ref{Hae62}, $\Lambda$ is a compact $f$-invariant subset of $\Gamma(f)$. If $\Lambda$ consists of finitely many accessibility classes, we can find a common period $N\in \mathbb{N}$ such that $f^N$ fixes every class in $\Lambda$.

    Suppose that $\Lambda$ contains infinitely many compact accessibility classes and $\Lambda\neq M$. Let $R\subset \Lambda^c$ be a maximal connected complementary region of $\Lambda$. Its boundary consists of finitely many compact accessibility classes (see for instance \cite[Chapter V, Corollary 3.2.4]{HectorHirsch}), which we denote by $T_i$, $i=1,\dots,n$. Note that $R$ has non-empty interior because $\Lambda$ is compact. Since every point of $M$ is non-wandering, for any $x\in \mathrm{int}(R)$ there exists an integer $k\in \mathbb{N}$ such that $f^k(x)\in \mathrm{int}(R)$. As both $\Lambda$ and its complement $\Lambda^c$ are $f$-invariant, it follows that the region $R$ is $f^k$-invariant. Moreover, the closure $\hat{R}:=R\cup \left(\bigcup\limits_{i=1}^{n}T_i\right)$ is also $f^k$-invariant. 
    
    Define $V:=\bigcup\limits_{j=1}^kf^j(\hat{R})$, which is clearly a closed $f$-invariant set. Transitivity implies that $V=M$. Indeed, if $V$ had a non-empty complement $V^c$, then $V^c$ would be an open set containing a small open ball $B\subset V^c$. By construction, $V$ has non-empty interior and thus contains a small open ball $D\subset V$. By transitivity, there exists an integer $N\in \mathbb{N}$ such that $f^N(D)\cap B\neq \emptyset$. However, $V$ and $V^c$ are disjoint $f$-invariant sets, a contradiction. Hence, $V=M$. It follows that $\Lambda$ consists of finitely many compact accessibility classes, namely $\{f^j(T_i)\}_{1\leq i\leq n,1\leq j\leq k}$. This contradicts the assumption that $\Lambda$ contains infinitely many classes. Therefore, $\Lambda$ must be a foliation by compact leaves, which completes the proof.
\end{proof}

\section{Ergodicity}\label{section-erg}

This section is devoted to proving Theorem~\ref{transitive<=>ergodic}, which establishes the equivalence of transitivity and ergodicity for conservative partially hyperbolic diffeomorphisms in dimension three. It immediately follows from Proposition~\ref{ergodic=>transitive}, Theorem~\ref{sol}, and Theorem~\ref{CAF}. In particular, we also provide an arbitrarily dimensional result for ergodicity (Theorem~\ref{ergodic-AB}).

The first result is well-known and holds for any continuous map (not necessarily invertible or differentiable) on a connected space (not necessarily a compact manifold).
\begin{prop}\label{ergodic=>transitive}
    Let $f:M\to M$ be a conservative continuous map on a connected manifold (not necessarily compact). If $f$ is ergodic, then it is transitive.
\end{prop}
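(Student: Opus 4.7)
The plan is to run the standard Birkhoff / countable-basis argument, using only the two ingredients provided by the hypothesis: (i) $f$ preserves a (locally finite, positive-on-opens) Borel measure $\mu$ that lives in the smooth measure class on $M$, so that every nonempty open set has positive $\mu$-measure; and (ii) the only $f$-invariant measurable sets are $\mu$-null or $\mu$-conull. Transitivity will be proved in the strong form: there is a point of $M$ whose forward orbit is dense.

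First, I would fix a countable basis $\{U_i\}_{i\in\mathbb{N}}$ of the topology of $M$, which exists because $M$ is a manifold (hence second countable), and note that $\mu(U_i)>0$ for every $i$ by the full-support property. Next, for each $i$ set
\[
A_i \;=\; \bigcup_{n\ge 0} f^{-n}(U_i),
\]
the set of points whose forward orbit eventually visits $U_i$. Then $f^{-1}(A_i)\subseteq A_i$, so $A_i$ is forward-invariant up to a $\mu$-null set; more precisely, since $f$ preserves $\mu$, the symmetric difference $A_i \,\triangle\, f^{-1}(A_i)$ has measure zero, so $A_i$ coincides modulo $\mu$-null with an honestly $f$-invariant set. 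Since $\mu(A_i)\ge \mu(U_i)>0$, ergodicity forces $\mu(A_i^c)=0$.

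Taking a countable intersection, the set $A=\bigcap_i A_i$ is $\mu$-conull, hence nonempty (and in fact dense, since its complement is $\mu$-null and open sets have positive measure). For any $x\in A$ and any $i$, there exists $n\ge 0$ with $f^n(x)\in U_i$; as $\{U_i\}$ is a basis, this means the forward orbit $\{f^n(x):n\ge 0\}$ is dense in $M$, which is the definition of transitivity.

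There is no real obstacle here; the only point that needs a brief justification is that the hypothesis "conservative" on a possibly non-compact connected manifold is to be read as "preserves a Borel measure in the Lebesgue class," so that every nonempty open set has positive measure and ergodicity of $f$ is meaningful. Once this is spelled out, the argument above is entirely formal and does not use differentiability, invertibility, or compactness of $M$ — exactly the level of generality claimed in the statement.
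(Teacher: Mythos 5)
Your argument is correct, and it reaches a slightly stronger conclusion than the paper does, by a mildly different route. The paper's proof is the two-open-set version: for open $U,V$ of positive measure it saturates $U$ under all iterates, notes that $\Lambda=\bigcup_{i\in\mathbb{Z}}f^i(U)$ is invariant of positive measure, hence conull by ergodicity, and so meets $V$. You instead run the countable-basis version: for each basic open $U_i$ the backward saturation $A_i=\bigcup_{n\ge 0}f^{-n}(U_i)$ is conull, and $\bigcap_i A_i$ is a conull (hence dense, hence nonempty) set of points with dense forward orbit. The underlying mechanism --- ergodicity forces any positive-measure saturation of an open set to be conull --- is the same in both, but your packaging buys two things: it handles non-invertible maps more cleanly (the paper's use of forward images $f^i(U)$ and of $i\in\mathbb{Z}$ is slightly awkward when $f$ is only a continuous map, since forward images of open sets need not be open or even obviously measurable), and it produces an actual dense forward orbit rather than just topological transitivity in the two-open-set sense (the two are equivalent here since a positive-dimensional connected manifold has no isolated points, a point worth one line). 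One small caveat: your assertion that $\mu\bigl(A_i\,\triangle\,f^{-1}(A_i)\bigr)=0$ follows from $f^{-1}(A_i)\subseteq A_i$ together with $\mu(f^{-1}(A_i))=\mu(A_i)$ only when $\mu(A_i)<\infty$; this is automatic under the reading the paper itself adopts (a preserved \emph{probability} measure equivalent to Lebesgue), but if one insisted on an infinite invariant measure in the Lebesgue class one would instead invoke conservativity (Halmos recurrence) at this step. With that reading fixed, the proof is complete.
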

\begin{proof}
    Let $\mu$ be a smooth probability measure equivalent to the Lebesgue measure on $M$. For any two open sets $U$ and $V$, we have $\mu(U)>0$ and $\mu(V)>0$. By ergodicity, the set $\Lambda:=\bigcup_{i\in \mathbb{Z}}f^i(U)$ has full $\mu$-measure. Hence, we have $\Lambda\cap V\neq \emptyset$. Thus, there exists an integer $i\in \mathbb{Z}$ such that $f^i(U)\cap V\neq\emptyset$, which implies that $f$ is transitive.
\end{proof}

It remains to prove ergodicity under the assumption of transitivity when the fundamental group is virtually solvable (Theorem~\ref{sol}). Before doing so, we recall the following result, which provides a description of all accessibility classes of transitive $AB$-systems.

\begin{thm}\cite[Theorem 2.4]{Hammerlindl17}\label{Ham-2.4}
    Let $f:M\to M$ be an $AB$-system with $NW(f)=M$. Then, one of the following holds.
    \begin{enumerate}
        \item $f$ is accessible and transitive.
        \item $E^s$ and $E^u$ are jointly integrable and $f$ is topologically conjugate to the map $A\times R_{\theta}:M_B\to M_B$, $(v,t)\mapsto (Av, t+\theta)$ for some $\theta\in \mathbb{R}$. Furthermore, $f$ is transitive if and only if $\theta$ is irrational.
        \item There are $n\geq 1$, a continuous surjection $p:M\to\mathbb{S}^1$, and a non-empty open subset $U\subsetneq\mathbb{S}^1$ such that
        \begin{itemize}
            \item for every connected component $I$ of $U$, $p^{-1}(I)$ is a single open accessibility class of $f$ that is $f^n$-invariant and homeomorphic to $N\times I$; and
            \item for every $t\in \mathbb{S}^1\setminus U$, $p^{-1}(t)$ is an $f^n$-invariant submanifold tangent to $E^s\oplus E^u$ and homeomorphic to $N$.
        \end{itemize}
    \end{enumerate}
\end{thm}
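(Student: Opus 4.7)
The plan is to split into cases according to the dichotomy in Theorem~\ref{Ham-2.5}. If $f$ is accessible, Brin's theorem together with $NW(f)=M$ immediately yields transitivity, giving case (1), so I assume $f$ is non-accessible throughout the rest of the argument.

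In the non-accessible case, Theorem~\ref{Gamma-f} ensures that the set $\Gamma(f)$ of non-open accessibility classes is a nonempty codimension-one compact $f$-invariant lamination. Using the leaf conjugacy $h:M_B\to M$ from the $AB$-system definition, I would set $p:=\pi_{AB}\circ h^{-1}:M\to\mathbb{S}^1$, where $\pi_{AB}(v,t):=t\bmod 1$ is the natural base projection of $M_B$; this $p$ exhibits $M$ as a topological $N$-bundle over $\mathbb{S}^1$ whose fibres are the $h$-images of the $N$-slices. The key technical step is to show that each leaf of $\Gamma(f)$ coincides with a fibre $p^{-1}(t)$. Granting this, the $f$-action descends to a homeomorphism $\bar f:\mathbb{S}^1\to\mathbb{S}^1$ preserving the closed set $p(\Gamma(f))$, and the structure of $f$ relative to $p$ can be read off from $\bar f$.

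The dichotomy between cases (2) and (3) is then determined by whether $\Gamma(f)=M$. If $\Gamma(f)=M$, every fibre of $p$ is an accessibility class, so $E^s\oplus E^u$ integrates to this fibration; a bundle-trivialisation argument then conjugates $f$ to $A\times\bar f$ on $M_B$, and $NW(f)=M$ excludes wandering intervals for $\bar f$, forcing $\bar f$ to be conjugate to a rotation $R_\theta$, with transitivity equivalent to $\theta\notin\mathbb{Q}$, yielding case (2). If $\Gamma(f)\neq M$, the open complement of $\Gamma(f)$ decomposes as a disjoint union of open accessibility classes, each of the form $p^{-1}(I)$ for a component $I$ of $U:=\mathbb{S}^1\setminus p(\Gamma(f))$; since $\bar f$ permutes the components of $U$ and preserves $\mathbb{S}^1\setminus U$, the combination of Theorem~\ref{Hae62} (finiteness of boundary components of a codimension-one lamination) and $NW(f)=M$ extracts a finite uniform period $n$, yielding case (3).

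The main obstacle I anticipate is the identification of $\Gamma(f)$-leaves with $p$-fibres. In the prototype $f_{AB}$, fibres of $\pi_{AB}$ are precisely accessibility classes, but $h$ is only a leaf conjugacy of the center foliation and need not transport the $su$-foliations of $f_{AB}$ to those of $f$, so the compact leaves of $\Gamma(f)$ in $M$ are a priori only topologically transverse to $E^c$, not equal to $p$-fibres. I expect the resolution to involve modifying $h$ by a center-holonomy isotopy so that fibres become tangent to $E^s\oplus E^u$ along $\Gamma(f)$, combined with a dimension-and-connectedness argument inside each fibre $p^{-1}(t)\cong N$ to promote the inclusion of a compact accessibility class into a fibre to an equality.
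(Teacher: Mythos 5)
First, note that this statement is imported from \cite[Theorem 2.4]{Hammerlindl17}: the paper does not reprove it, and the only original content it supplies is the remark immediately following the statement, which justifies the refinement that each $p^{-1}(I)$ in case (3) is a \emph{single} open accessibility class. Your sketch reproduces the broad skeleton of Hammerlindl's argument (accessible case via Brin; otherwise analyze $\Gamma(f)$ relative to a projection $p$ to $\mathbb{S}^1$), but it has a genuine gap precisely at the point the paper singles out. In your case (3) you assert that the complement of $\Gamma(f)$ decomposes into sets $p^{-1}(I)$ with $I$ a component of $\mathbb{S}^1\setminus p(\Gamma(f))$, and you implicitly identify $\Gamma(f)$ with the union of the \emph{compact} accessibility classes. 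But $\Gamma(f)$ is the lamination of all non-open classes, and a priori a component $p^{-1}(I)$ of the complement of the compact classes may contain non-compact leaves of $\Gamma(f)$ (e.g.\ classes spiralling onto the boundary tori), in which case $p^{-1}(I)$ is not a single open accessibility class and the asserted decomposition fails. Ruling this out is exactly where $NW(f)=M$ must be used beyond extracting the period $n$: one shows that otherwise either there is an open $V\subset p^{-1}(I)$ with $\overline{f^n(V)}\subset V$ and empty intersection of its iterates, or $f^n|_{p^{-1}(I)}$ is semiconjugate to $(v,t)\mapsto(Av,\lambda t)$ with $\lambda\neq 1$; both produce wandering points, contradicting $NW(f)=M$. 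Your sketch never confronts this dichotomy.

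The other weak point is one you flag yourself: the identification of the leaves of $\Gamma(f)$ with fibres of $p:=\pi_{AB}\circ h^{-1}$. Since $h$ is only a leaf conjugacy of center foliations, it does not carry the $su$-structure, and a compact accessibility class is a priori only a compact codimension-one submanifold transverse to $E^c$; showing it meets each center leaf in exactly one point (so that a projection $p$ with the stated properties can be \emph{built from the accessibility classes}, rather than pulled back through $h$) is the substantive content of Hammerlindl's Theorems 2.5--2.6 and is not recovered by ``modifying $h$ by a center-holonomy isotopy.'' In the same vein, in your case (2) the conjugacy of the fibrewise dynamics to $A$ requires a Franks--Manning-type rigidity statement on each $su$-leaf, and the passage from $NW(\bar f)=\mathbb{S}^1$ to $\bar f$ being conjugate to a rotation needs the Poincar\'e classification applied with some care when the rotation number is rational. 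As written, the proposal is a plausible outline of the cited proof but does not close the step that this paper itself found necessary to supplement.
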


We note that the condition of being a single open accessibility class in case (3) was not explicitly stated in \cite[Theorem 2.4]{Hammerlindl17}, but it is implied by the proof. Indeed, case (3) follows from \cite[Theorem 2.6]{Hammerlindl17}, which assumes that $f$ is a non-accessible $AB$-system with at least one periodic compact accessibility class. Suppose that $p^{-1}(I)$ is not a single open accessibility class for some connected component $I\subset U$. There are two possibilities: either there exists an open set $V\subset p^{-1}(I)$ such that $\overline{f^n(V)}\subset V$ and $\bigcap_{k\in \mathbb{Z}}f^{kn}(V)=\emptyset$; or there are uncountably many non-compact accessibility classes in $p^{-1}(I)$ and a constant $\lambda\neq 1$ such that $f^n|_{p^{-1}(I)}$ is semiconjugate to $N\times \mathbb{R}\to N\times \mathbb{R}$, $(v,t)\mapsto (Av, \lambda t)$. In the first case, one deduces that $NW(f^n|_{p^{-1}(I)})\neq p^{-1}(I)$, contradicting the assumption $NW(f)=M$. In the latter case, the condition $\lambda\neq 1$ implies that $f^n|_{p^{-1}(I)}$ is either contracting or expanding in the fiber direction, which is impossible given $NW(f)=M$. Thus, $f^n|_{p^{-1}(I)}$ must be accessible, and hence $p^{-1}(I)$ is a single accessibility class of $f$ for each connected component $I\subset U$.

It is shown in \cite{HHU08invent,BW10} that (essentially) accessible volume-preserving $C^2$ partially hyperbolic diffeomorphisms with one-dimensional center are ergodic. This has been improved by \cite{Brown22} to the $C^{1+\alpha}$ case by establishing the smoothness of stable holonomies inside center-stable manifolds. We state an alternative version of this result below, where ergodicity is restricted to an invariant subset $D\subset M$ instead of the entire manifold $M$.

\begin{thm}\cite[Theorem B]{HHU08invent}\cite[Theorem 0.1]{BW10}\cite[Theorem 1.1]{Brown22}\label{acc->erg}
    Let $f:M\to M$ be a $C^{1+\alpha}$ partially hyperbolic diffeomorphism of a closed manifold with $dimE^c=1$ and $D\subset M$ be an $f$-invariant open accessibility class. Assume that $f$ preserves the $D$-volume measure $\mu_D:=\frac{1}{\mu(D)}\mu\circ\mathds{1}_D$, where $\mu$ is the volume measure on $M$. Then, $f$ is ergodic with respect to $\mu_D$; that is, for any $\phi\in C^0(M)$, we have 
    \begin{equation*}
        \tilde{\phi}(x)=\lim\limits_{n\to\infty}\frac{1}{n}\sum\limits_{i=0}^{n-1}\phi\circ f^i(x)=\int_D \phi\mathrm{d}\mu_D, \quad \mu_D-a.e.x.
    \end{equation*}
\end{thm}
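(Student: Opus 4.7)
The plan is to run the Hopf-type ergodicity argument of \cite{BW10, Brown22} \emph{inside} the invariant open set $D$, so that the hypotheses of the cited theorems are realized on $D$ rather than on the whole manifold $M$.

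First I would verify that $(f|_D, \mu_D)$ carries the internal structure needed to run the Hopf argument. Because $D$ is an accessibility class, it is $su$-saturated, so for every $x\in D$ the leaves $\mathcal{F}^s(x)$ and $\mathcal{F}^u(x)$ lie entirely in $D$. Since $D$ is open and $f$-invariant, the restriction $f|_D:D\to D$ is a $C^{1+\alpha}$ partially hyperbolic diffeomorphism with one-dimensional center that preserves the smooth probability measure $\mu_D$. Moreover $D$ is itself a single accessibility class of $f|_D$, so essential accessibility on $D$ holds trivially: any $su$-saturated measurable subset of $D$ has $\mu_D$-measure $0$ or $1$.

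Next I would apply Birkhoff's ergodic theorem on $(D, \mu_D, f|_D)$. For any $\phi\in C^0(M)$ the forward and backward Birkhoff averages $\tilde\phi^{\pm}$ exist and coincide $\mu_D$-a.e. A standard Lusin-type argument then produces a $\mu_D$-full measure set $\Lambda\subset D$ on which $\tilde\phi^{+}$ is constant on local stable plaques and $\tilde\phi^{-}$ is constant on local unstable plaques. The key technical inputs are absolute continuity of the $s$- and $u$-holonomies, together with Brown's $C^{1+\alpha}$ smoothness of stable holonomies inside center-stable manifolds; these allow the julienne construction of Burns--Wilkinson to be carried out at every Lebesgue density point of $\Lambda$. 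Because $D$ is $su$-saturated, every such julienne at a point of $D$ stays inside $D$, so the construction is internal to $D$. Since $D$ is a single accessibility class, the Hopf argument then forces $\tilde\phi$ to be $\mu_D$-a.e.\ constant, and this constant must equal $\int_D \phi\,\mathrm{d}\mu_D$ by $f$-invariance of $\mu_D$.

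The hardest part will be checking that every step of the Burns--Wilkinson--Brown argument -- the julienne construction, the absolute continuity estimates, and the passage from essential saturation to genuine $su$-saturation of Birkhoff level sets -- really localizes to $D$ without spurious contributions from the boundary $\partial D$. The saving grace is that $su$-saturation of $D$ guarantees that all relevant stable and unstable manifolds, julienne boxes, and $su$-paths based at points of $D$ remain inside $D$, so the original argument applies almost verbatim with $M$ replaced by the open submanifold $D$ and the ambient Lebesgue measure replaced by $\mu_D$.
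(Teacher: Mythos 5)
Your proposal is sound, and it is worth noting that the paper itself offers no proof of this statement: it is presented purely as a variant of the cited theorems of \cite{HHU08invent,BW10,Brown22}, with ergodicity restricted to the invariant open accessibility class $D$, so the localization you supply is exactly what is being left to the reader. You have isolated the right reasons it works: $D$ is open and $su$-saturated, so all stable and unstable leaves, $su$-paths, and sufficiently small juliennes based at points of $D$ stay inside $D$; essential accessibility is automatic on $D$ because $D$ is a single accessibility class; and $\mu_D$ is equivalent to Lebesgue measure on $D$, so density-point arguments are unaffected. Two small remarks. First, $D$ is a non-compact open set, so calling $f|_D$ a partially hyperbolic diffeomorphism is a slight abuse; what matters is that all the uniform constants (cone estimates, sizes of local invariant manifolds, center bunching, which is automatic for $\dim E^c=1$) are inherited from the compact ambient manifold $M$, and you should say so. Second, rather than re-running the entire Burns--Wilkinson--Brown machinery ``inside $D$'', it is slightly cleaner to invoke their main technical theorem globally on $M$: for $\phi\in C^0(M)$, the set $A_c=\{x\in D:\tilde{\phi}(x)\text{ exists and }\tilde{\phi}(x)>c\}$ is $s$-saturated and essentially $u$-saturated as a subset of $M$ (since leaves through points of $D$ remain in $D$, and forward and backward averages agree Lebesgue-a.e.\ on $D$); by \cite{BW10,Brown22} its Lebesgue density points then form a genuinely $su$-saturated set, which must either contain the accessibility class $D$ or miss it entirely, forcing $\mu_D(A_c)\in\{0,1\}$ for every $c$ and hence the stated ergodicity. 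Either route is valid; yours requires checking that the julienne estimates localize (they do, being local statements with constants from $M$), while the black-box route avoids that verification altogether.
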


The following result provides ergodicity in any dimension.
\begin{thm}\label{ergodic-AB}
    Let $f:M\to M$ be a $C^{1+\alpha}$ conservative $AB$-system with a compact accessibility class of a closed manifold. If $f$ is transitive, then it is ergodic.
\end{thm}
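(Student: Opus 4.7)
The plan is to use Theorem~\ref{c-dim-one} in combination with Theorem~\ref{Ham-2.4} to reduce ergodicity to two well-behaved scenarios, each handled by a different classical mechanism. Since $f$ is transitive and admits a compact accessibility class, Theorem~\ref{c-dim-one} yields the dichotomy: either (a) there are only finitely many compact accessibility classes, all $f$-periodic; or (b) they form an $su$-foliation by compact leaves. Transitivity also gives $NW(f)=M$, so Theorem~\ref{Ham-2.4} applies. Case (a) must correspond to case (3) of Theorem~\ref{Ham-2.4} with $S^1\setminus U$ finite, because case (2) of Theorem~\ref{Ham-2.4} produces uncountably many compact accessibility classes; and case (b) must correspond to case (2) of Theorem~\ref{Ham-2.4} with $\theta$ irrational, since rational $\theta$ would make $f$ non-transitive and case (3) has open accessibility classes.

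In case (a), the finitely many open accessibility classes $C_j:=p^{-1}(I_j)$, indexed by the components $I_j$ of $U$, are each $f^n$-invariant for the integer $n\geq 1$ given by Theorem~\ref{Ham-2.4}. Since $f^n$ preserves volume and leaves $C_j$ invariant, it preserves the restricted probability measure $\mu_{C_j}:=\mu|_{C_j}/\mu(C_j)$, and Theorem~\ref{acc->erg} applied to $f^n$ on $C_j$ gives ergodicity with respect to $\mu_{C_j}$. Hence, for any $f$-invariant measurable set $E$, we have $\mu(E\cap C_j)\in\{0,\mu(C_j)\}$ for every $j$; define $J:=\{j:\mu(E\cap C_j)=\mu(C_j)\}$. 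The permutation of $\{C_j\}$ induced by $f$ leaves $J$ invariant, and if $J$ were a proper non-empty subset, then $\bigsqcup_{j\in J}C_j$ and $\bigsqcup_{j\notin J}C_j$ would be disjoint non-empty open $f$-invariant sets, contradicting transitivity. Since the union of the compact accessibility classes has zero volume, $\mu(E)\in\{0,1\}$.

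In case (b), the topological conjugacy to $A\times R_\theta$ provides a continuous surjection $p:M\to S^1$ whose fibers are exactly the accessibility classes of $f$, and the induced action on $S^1$ is the irrational rotation $R_\theta$. Volume preservation forces $p_*\mu$ to be $R_\theta$-invariant, hence equal to Lebesgue measure by unique ergodicity. The Hopf argument, valid for $C^{1+\alpha}$ partially hyperbolic diffeomorphisms with one-dimensional center thanks to the regularity of stable/unstable holonomies on center-(un)stable manifolds established in~\cite{BW10,Brown22}, ensures that any $f$-invariant measurable set $E\subset M$ agrees modulo a $\mu$-null set with $p^{-1}(F)$ for some measurable $F\subset S^1$. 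The $f$-invariance of $E$ then translates to $R_\theta$-invariance of $F$ modulo Lebesgue-null; ergodicity of $R_\theta$ gives $\mathrm{Leb}(F)\in\{0,1\}$, and hence $\mu(E)\in\{0,1\}$.

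The main obstacle is the Hopf-type step in case (b), namely the passage from $f$-invariance to essential $su$-saturation, which requires the $C^{1+\alpha}$ hypothesis via the absolute continuity and smoothness of stable/unstable holonomies developed in~\cite{BW10,Brown22}. Case (a) is essentially combinatorial once Theorem~\ref{acc->erg} is applied to each open accessibility class.
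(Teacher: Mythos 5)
Your proof is correct and follows essentially the same route as the paper: the dichotomy of Theorem~\ref{c-dim-one} combined with Theorem~\ref{Ham-2.4}, with Theorem~\ref{acc->erg} applied to an iterate of $f$ on each periodic open accessibility class and transitivity used to glue the pieces, and the irrational-rotation factor handling the foliated case. The only differences are in execution: you treat the finitely-many-classes case via invariant sets and a permutation argument where the paper computes Birkhoff averages directly, and you spell out the Hopf-argument/unique-ergodicity justification for the compact-foliation case that the paper merely asserts.
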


\begin{proof}
    By definition, every $AB$-system has a one-dimensional center bundle. As shown in Theorem~\ref{c-dim-one}, either there exists an $su$-foliation by compact accessibility classes, or there are finitely many compact accessibility classes, all of which are periodic. In the first case, by Theorem~\ref{Ham-2.4}, $f$ is topologically conjugate to $A\times R_{\theta}$, where $A:N\to N$ is an Anosov automorphism of a compact nilmanifold and $R_{\theta}:\mathbb{S}^1\to \mathbb{S}^1$ is an irrational rotation with $\theta\in \mathbb{R}\setminus\mathbb{Q}$. This implies that $f$ is ergodic. We note that $C^{1+\alpha}$ regularity is sufficient to establish ergodicity in this case.

    In the second case, let $L_i$, $i=1,\dots,k$, denote all the compact accessibility classes and set $\mathcal{L}=\bigcup_{i=1}^{k}L_i$. The complement $\mathcal{D}:=M\setminus\mathcal{L}$ is a non-empty open set. By case (3) of Theorem~\ref{Ham-2.4}, there exists a continuous surjection $p:M\to \mathbb{S}^1$ that maps $\mathcal{D}$ onto an open set $U\subset\mathbb{S}^1$ and maps $\mathcal{L}$ to a finite set of points $z_i\in \mathbb{S}^1$, $i=1,\dots,k$. These points divide $\mathbb{S}^1$ into $k$ connected open intervals $I_i\subset U$, $i=1,\dots,k$. By Theorem~\ref{Ham-2.4}(3), each preimage $D_i:=p^{-1}(I_i)$ is a single open accessibility class of $f$, and we have $\mathcal{D}=\bigcup_{i=1}^kD_i$. Both $\mathcal{L}$ and $\mathcal{D}$ are $f$-invariant. By transitivity, after relabeling the sets $D_i$ and $I_i$ appropriately, we may assume that for any $i,j\in\{1,\dots,k\}$,
    \[
    f^i(D_j)=D_{i+j \pmod{k}}.
    \]
    Let $\mu$ be the $f$-invariant volume measure on $M$. Clearly $\mu(\mathcal{L})=0$. By $f$-invariance, for each $i\in\{1,\dots,k\}$,
    \[
    \mu(D_i)=\mu(f^i(D_1))=\mu(D_1)=\frac{1}{k},
    \]
    because $\mu\bigl(\bigcup_{i=1}^kD_i\bigr)=1$.

    Now, fix a continuous function $\phi\in C^0(M)$. For $\mu$-almost every $x\in M$, we have $x\in\bigcup_{i=1}^kD_i$. Write $x\in D_{i_0}$ for some $i_0$. Then, for each $m=0,\dots,k-1$, the points $f^{ik+m}(x)$ for $i=0,1,\dots$ lie in $D_{i_0+m \pmod{k}}$. By Theorem~\ref{acc->erg}, the restriction of $f^k$ to each $D_i$ is ergodic with respect to the conditional measure $\mu_{D_i}:=\frac{1}{\mu(D_i)}\mu\circ \mathds{1}_{D_i}=k\,\mu\circ\mathds{1}_{D_i}$. Therefore, for each $m$,
    \[
    \lim_{j\to\infty}\frac{1}{j}\sum_{i=0}^{j-1}\phi\bigl(f^{ik+m}(x)\bigr)=\int_{D_{i_0+m \pmod{k}}}\phi\,d\mu_{D_{i_0+m \pmod{k}}}.
    \]
    Consequently,
    \begin{align*}
        \lim_{n\to\infty}\frac{1}{n}\sum_{i=0}^{n-1}\phi\bigl(f^i(x)\bigr)
        &=\frac{1}{k}\sum_{m=0}^{k-1}\lim_{j\to\infty}\frac{1}{j}\sum_{i=0}^{j-1}\phi\bigl(f^{ik+m}(x)\bigr)\\
        &=\frac{1}{k}\sum_{m=0}^{k-1}\int_{D_{i_0+m \pmod{k}}}\phi\,d\mu_{D_{i_0+m \pmod{k}}}\\
        &=\frac{1}{k}\sum_{m=0}^{k-1}k\int_{D_{i_0+m \pmod{k}}}\phi\,d\mu\\
        &=\sum_{m=0}^{k-1}\int_{D_{i_0+m \pmod{k}}}\phi\,d\mu\\
        &=\int_{\bigcup_{i=1}^k D_i}\phi\,d\mu\\
        &=\int_M\phi\,d\mu.
    \end{align*}
    Since this holds for every $\phi\in C^0(M)$, the map $f$ is ergodic with respect to $\mu$. This completes the proof.
\end{proof}

We now consider the case of general partially hyperbolic diffeomorphisms in dimension three, and present the proof of Theorem~\ref{sol}.

\begin{proof}[Proof of Theorem~\ref{sol}]
    If $f$ is accessible, then it is ergodic by Theorem~\ref{acc->erg}. If $f$ is a DA diffeomorphism on $\mathbb{T}^3$, then it is also ergodic by \cite{GS-DA}. By Theorem~\ref{NW}, the only remaining case is when $f$ is an infra-$AB$-system with a compact accessibility class. We can freely pass to a finite cover and consider an iterate $\hat{f}^k$, since ergodicity of $\hat{f}^k$ implies ergodicity of $f$. Hence, without loss of generality, we assume that $f$ itself is an $AB$-system with a compact accessibility class. Then, Theorem~\ref{ergodic-AB} applies directly, yielding the ergodicity of $f$. This completes the proof.
\end{proof}


\section*{Acknowledgements}
I am grateful to Rafael Potrie for his valuable suggestions and to Meysam Nassiri for the conversations on his conjecture.

\bibliographystyle{alpha}
\bibliography{ref}

@article {AVW-flow,
    AUTHOR = {Avila, Artur and Viana, Marcelo and Wilkinson, Amie},
     TITLE = {Absolute continuity, {L}yapunov exponents and rigidity {I}:
              geodesic flows},
   JOURNAL = {J. Eur. Math. Soc. (JEMS)},
  FJOURNAL = {Journal of the European Mathematical Society (JEMS)},
    VOLUME = {17},
      YEAR = {2015},
    NUMBER = {6},
     PAGES = {1435--1462},
      ISSN = {1435-9855,1435-9863},
   MRCLASS = {37D30 (37A20 37C40 37D40 53D25)},
  MRNUMBER = {3353805},
MRREVIEWER = {Boris\ Hasselblatt},
       DOI = {10.4171/JEMS/534},
       URL = {https://doi.org/10.4171/JEMS/534},
}

@article {HS-DA,
    AUTHOR = {Hammerlindl, Andy and Shi, Yi},
     TITLE = {Accessibility of derived-from-{A}nosov systems},
   JOURNAL = {Trans. Amer. Math. Soc.},
  FJOURNAL = {Transactions of the American Mathematical Society},
    VOLUME = {374},
      YEAR = {2021},
    NUMBER = {4},
     PAGES = {2949--2966},
      ISSN = {0002-9947,1088-6850},
   MRCLASS = {37D20 (37D30)},
  MRNUMBER = {4223038},
MRREVIEWER = {Todd\ Fisher},
       DOI = {10.1090/tran/8292},
       URL = {https://doi.org/10.1090/tran/8292},
}

@article {GS-DA,
    AUTHOR = {Gan, Shaobo and Shi, Yi},
     TITLE = {Rigidity of center {L}yapunov exponents and
              {$su$}-integrability},
   JOURNAL = {Comment. Math. Helv.},
  FJOURNAL = {Commentarii Mathematici Helvetici. A Journal of the Swiss
              Mathematical Society},
    VOLUME = {95},
      YEAR = {2020},
    NUMBER = {3},
     PAGES = {569--592},
      ISSN = {0010-2571,1420-8946},
   MRCLASS = {37D30 (37D20 37D25)},
  MRNUMBER = {4152625},
MRREVIEWER = {Xinsheng\ Wang},
       DOI = {10.4171/CMH/497},
       URL = {https://doi.org/10.4171/CMH/497},
}

@article {BonattiGuelman10,
    AUTHOR = {Bonatti, Christian and Guelman, Nancy},
     TITLE = {Axiom {A} diffeomorphisms derived from {A}nosov flows},
   JOURNAL = {J. Mod. Dyn.},
  FJOURNAL = {Journal of Modern Dynamics},
    VOLUME = {4},
      YEAR = {2010},
    NUMBER = {1},
     PAGES = {1--63},
      ISSN = {1930-5311,1930-532X},
   MRCLASS = {37D20 (37C05 37C20 37E99)},
  MRNUMBER = {2643887},
MRREVIEWER = {Evgeny\ Zhuzhoma},
       DOI = {10.3934/jmd.2010.4.1},
       URL = {https://doi.org/10.3934/jmd.2010.4.1},
}

@article {Hammerlindl17,
    AUTHOR = {Hammerlindl, Andy},
     TITLE = {Ergodic components of partially hyperbolic systems},
   JOURNAL = {Comment. Math. Helv.},
  FJOURNAL = {Commentarii Mathematici Helvetici. A Journal of the Swiss
              Mathematical Society},
    VOLUME = {92},
      YEAR = {2017},
    NUMBER = {1},
     PAGES = {131--184},
      ISSN = {0010-2571,1420-8946},
   MRCLASS = {37D30 (37C40 57R30 57S25)},
  MRNUMBER = {3615038},
MRREVIEWER = {Rafael\ Potrie},
       DOI = {10.4171/CMH/409},
       URL = {https://doi.org/10.4171/CMH/409},
}

@article {BI09,
    AUTHOR = {Burago, Dmitri and Ivanov, Sergei},
     TITLE = {Partially hyperbolic diffeomorphisms of 3-manifolds with
              abelian fundamental groups},
   JOURNAL = {J. Mod. Dyn.},
  FJOURNAL = {Journal of Modern Dynamics},
    VOLUME = {2},
      YEAR = {2008},
    NUMBER = {4},
     PAGES = {541--580},
      ISSN = {1930-5311,1930-532X},
   MRCLASS = {37D30 (37C70 37D10)},
  MRNUMBER = {2449138},
MRREVIEWER = {Lorenzo\ J.\ D\'iaz},
       DOI = {10.3934/jmd.2008.2.541},
       URL = {https://doi.org/10.3934/jmd.2008.2.541},
}

@book{Hae62,
	author = {Haefliger, A.},
	title = {Varietes Feuilletees},
	booktitle = {Topologia differenziale},
	date = {2010},
	OPTpages = {29-76},
}

@book{HectorHirsch,
	title={Introduction to the Geometry of Foliations, Part B},
	author={Hector, G. and Hirsch, U.},
	year = {1987},
	publisher={Springer}
}

@article {FP-ajm,
    AUTHOR = {Fenley, Sergio R. and Potrie, Rafael},
     TITLE = {Accessibility and ergodicity for collapsed {A}nosov flows},
   JOURNAL = {Amer. J. Math.},
  FJOURNAL = {American Journal of Mathematics},
    VOLUME = {146},
      YEAR = {2024},
    NUMBER = {5},
     PAGES = {1339--1359},
      ISSN = {0002-9327,1080-6377},
   MRCLASS = {37D25 (37A25)},
  MRNUMBER = {4855301},
}

@misc{FP-classify,
      title={Partially hyperbolic diffeormorphisms, ergodicity, and transverse foliations in dimension 3}, 
      author={S. R. Fenley and R. Potrie},
      year={2025},
      eprint={2510.15176},
      archivePrefix={arXiv},
      primaryClass={math.DS},
      url={https://arxiv.org/abs/2510.15176}, 
}

@article {HHU08nil,
    AUTHOR = {Rodriguez Hertz, Federico and Rodriguez Hertz, Maria Alejandra and Ures, Raul},
     TITLE = {Partial hyperbolicity and ergodicity in dimension three},
   JOURNAL = {J. Mod. Dyn.},
  FJOURNAL = {Journal of Modern Dynamics},
    VOLUME = {2},
      YEAR = {2008},
    NUMBER = {2},
     PAGES = {187--208},
      ISSN = {1930-5311,1930-532X},
   MRCLASS = {37D30 (37A25 37C40 37E99)},
  MRNUMBER = {2383266},
MRREVIEWER = {Carlos\ A.\ Morales},
       DOI = {10.3934/jmd.2008.2.187},
       URL = {https://doi.org/10.3934/jmd.2008.2.187},
}

@article {HP-sol,
    AUTHOR = {Hammerlindl, Andy and Potrie, Rafael},
     TITLE = {Classification of partially hyperbolic diffeomorphisms in
              3-manifolds with solvable fundamental group},
   JOURNAL = {J. Topol.},
  FJOURNAL = {Journal of Topology},
    VOLUME = {8},
      YEAR = {2015},
    NUMBER = {3},
     PAGES = {842--870},
      ISSN = {1753-8416,1753-8424},
   MRCLASS = {37C05 (37C20 37D30 57R30)},
  MRNUMBER = {3394318},
MRREVIEWER = {Todd\ Fisher},
       DOI = {10.1112/jtopol/jtv009},
       URL = {https://doi.org/10.1112/jtopol/jtv009},
}

@article {Parwani10,
    AUTHOR = {Parwani, Kamlesh},
     TITLE = {On 3-manifolds that support partially hyperbolic
              diffeomorphisms},
   JOURNAL = {Nonlinearity},
  FJOURNAL = {Nonlinearity},
    VOLUME = {23},
      YEAR = {2010},
    NUMBER = {3},
     PAGES = {589--606},
      ISSN = {0951-7715,1361-6544},
   MRCLASS = {37D30 (37D10 37E99 57M60)},
  MRNUMBER = {2586372},
MRREVIEWER = {Slobodan\ N.\ Simi\'c},
       DOI = {10.1088/0951-7715/23/3/009},
       URL = {https://doi.org/10.1088/0951-7715/23/3/009},
}

@article {BW10,
    AUTHOR = {Burns, Keith and Wilkinson, Amie},
     TITLE = {On the ergodicity of partially hyperbolic systems},
   JOURNAL = {Ann. of Math. (2)},
  FJOURNAL = {Annals of Mathematics. Second Series},
    VOLUME = {171},
      YEAR = {2010},
    NUMBER = {1},
     PAGES = {451--489},
      ISSN = {0003-486X,1939-8980},
   MRCLASS = {37D30 (37A25 37C40)},
  MRNUMBER = {2630044},
MRREVIEWER = {Alexander\ Arbieto},
       DOI = {10.4007/annals.2010.171.451},
       URL = {https://doi.org/10.4007/annals.2010.171.451},
}

@article {Brown22,
    AUTHOR = {Brown, Aaron},
     TITLE = {Smoothness of stable holonomies inside center-stable
              manifolds},
   JOURNAL = {Ergodic Theory Dynam. Systems},
  FJOURNAL = {Ergodic Theory and Dynamical Systems},
    VOLUME = {42},
      YEAR = {2022},
    NUMBER = {12},
     PAGES = {3593--3618},
      ISSN = {0143-3857,1469-4417},
   MRCLASS = {37D30 (37C40 37C86)},
  MRNUMBER = {4504676},
MRREVIEWER = {Fang\ Wang},
       DOI = {10.1017/etds.2021.99},
       URL = {https://doi.org/10.1017/etds.2021.99},
}

@article {Martinchich23,
    AUTHOR = {Martinchich, Santiago},
     TITLE = {Global stability of discretized {A}nosov flows},
   JOURNAL = {J. Mod. Dyn.},
  FJOURNAL = {Journal of Modern Dynamics},
    VOLUME = {19},
      YEAR = {2023},
     PAGES = {561--623},
      ISSN = {1930-5311,1930-532X},
   MRCLASS = {37D30 (37C86)},
  MRNUMBER = {4629539},
MRREVIEWER = {Marisa\ Cantarino},
       DOI = {10.3934/jmd.2023016},
       URL = {https://doi.org/10.3934/jmd.2023016},
}

@book {Hirsch-Pugh-Shub,
    AUTHOR = {Hirsch, M. W. and Pugh, C. C. and Shub, M.},
     TITLE = {Invariant manifolds},
    SERIES = {Lecture Notes in Mathematics},
    VOLUME = {Vol. 583},
 PUBLISHER = {Springer-Verlag, Berlin-New York},
      YEAR = {1977},
     PAGES = {ii+149},
   MRCLASS = {58F15 (58F10)},
  MRNUMBER = {501173},
MRREVIEWER = {M.\ C.\ Irwin},
}

@article {Bonatti-Wilkinson05,
    AUTHOR = {Bonatti, Christian and Wilkinson, Amie},
     TITLE = {Transitive partially hyperbolic diffeomorphisms on
              3-manifolds},
   JOURNAL = {Topology},
  FJOURNAL = {Topology. An International Journal of Mathematics},
    VOLUME = {44},
      YEAR = {2005},
    NUMBER = {3},
     PAGES = {475--508},
      ISSN = {0040-9383},
   MRCLASS = {37D30 (37C05 37C15 37E99)},
  MRNUMBER = {2122214},
MRREVIEWER = {Carlos\ A.\ Morales},
       DOI = {10.1016/j.top.2004.10.009},
       URL = {https://doi.org/10.1016/j.top.2004.10.009},
}

@article {Brin75,
    AUTHOR = {Brin, M. I.},
     TITLE = {Topological transitivity of a certain class of dynamical
              systems, and flows of frames on manifolds of negative
              curvature},
   JOURNAL = {Funkcional. Anal. i Prilo\v zen.},
  FJOURNAL = {Akademija Nauk SSSR. Funkcional\cprime nyi Analiz i ego
              Prilo\v zenija},
    VOLUME = {9},
      YEAR = {1975},
    NUMBER = {1},
     PAGES = {9--19},
      ISSN = {0374-1990},
   MRCLASS = {58F15},
  MRNUMBER = {370660},
MRREVIEWER = {A.\ Morimoto},
}

@article {Plante72,
    AUTHOR = {Plante, Joseph F.},
     TITLE = {Anosov flows},
   JOURNAL = {Amer. J. Math.},
  FJOURNAL = {American Journal of Mathematics},
    VOLUME = {94},
      YEAR = {1972},
     PAGES = {729--754},
      ISSN = {0002-9327,1080-6377},
   MRCLASS = {57D30 (58F15)},
  MRNUMBER = {377930},
MRREVIEWER = {Robert\ Roussarie},
       DOI = {10.2307/2373755},
       URL = {https://doi.org/10.2307/2373755},
}

@article {FP-adv,
    AUTHOR = {Fenley, Sergio R. and Potrie, Rafael},
     TITLE = {Ergodicity of partially hyperbolic diffeomorphisms in
              hyperbolic 3-manifolds},
   JOURNAL = {Adv. Math.},
  FJOURNAL = {Advances in Mathematics},
    VOLUME = {401},
      YEAR = {2022},
     PAGES = {Paper No. 108315, 45},
      ISSN = {0001-8708,1090-2082},
   MRCLASS = {37C40 (37D30 57R30)},
  MRNUMBER = {4392225},
MRREVIEWER = {Jana\ Rodriguez\ Hertz},
       DOI = {10.1016/j.aim.2022.108315},
       URL = {https://doi.org/10.1016/j.aim.2022.108315},
}

@article{CrovisierPotrie,
      title={Introduction to partially hyperbolic dynamics}, 
      author={Crovisier, Sylvain and Potrie, Rafael},
      year={2015},
      journal={ICTP Lecture Notes, Available in the webpages of the authors},
}

@article {FarrellGogolev,
    AUTHOR = {Farrell, F. Thomas and Gogolev, Andrey},
     TITLE = {Anosov diffeomorphisms constructed from {$\pi_k({\rm
              Diff}(S^n))$}},
   JOURNAL = {J. Topol.},
  FJOURNAL = {Journal of Topology},
    VOLUME = {5},
      YEAR = {2012},
    NUMBER = {2},
     PAGES = {276--292},
      ISSN = {1753-8416,1753-8424},
   MRCLASS = {37E30 (57M27 57M50)},
  MRNUMBER = {2928077},
MRREVIEWER = {Athanase\ Papadopoulos},
       DOI = {10.1112/jtopol/jts004},
       URL = {https://doi.org/10.1112/jtopol/jts004},
}

@article {FarrellJones,
    AUTHOR = {Farrell, F. T. and Jones, L. E.},
     TITLE = {Anosov diffeomorphisms constructed from {$\pi \sb{1}\,{\rm
              Diff}\,(S\sp{n})$}},
   JOURNAL = {Topology},
  FJOURNAL = {Topology. An International Journal of Mathematics},
    VOLUME = {17},
      YEAR = {1978},
    NUMBER = {3},
     PAGES = {273--282},
      ISSN = {0040-9383},
   MRCLASS = {58F15},
  MRNUMBER = {508890},
MRREVIEWER = {Clark\ Robinson},
       DOI = {10.1016/0040-9383(78)90031-9},
       URL = {https://doi.org/10.1016/0040-9383(78)90031-9},
}

@article {BrinPesin74,
    AUTHOR = {Brin, M. I. and Pesin, Ja.\ B.},
     TITLE = {Partially hyperbolic dynamical systems},
   JOURNAL = {Izv. Akad. Nauk SSSR Ser. Mat.},
  FJOURNAL = {Izvestiya Akademii Nauk SSSR. Seriya Matematicheskaya},
    VOLUME = {38},
      YEAR = {1974},
     PAGES = {170--212},
      ISSN = {0373-2436},
   MRCLASS = {58F15 (28A65)},
  MRNUMBER = {343316},
MRREVIEWER = {A.\ Morimoto},
}

@article {PughShub72,
    AUTHOR = {Pugh, Charles and Shub, Michael},
     TITLE = {Ergodicity of {A}nosov actions},
   JOURNAL = {Invent. Math.},
  FJOURNAL = {Inventiones Mathematicae},
    VOLUME = {15},
      YEAR = {1972},
     PAGES = {1--23},
      ISSN = {0020-9910,1432-1297},
   MRCLASS = {58F10},
  MRNUMBER = {295390},
MRREVIEWER = {Karl\ Sigmund},
       DOI = {10.1007/BF01418639},
       URL = {https://doi.org/10.1007/BF01418639},
}

@article {AnosovSinai67,
    AUTHOR = {Anosov, D. V. and Sina\u i, Ja.\ G.},
     TITLE = {Certain smooth ergodic systems},
   JOURNAL = {Uspehi Mat. Nauk},
  FJOURNAL = {Akademija Nauk SSSR i Moskovskoe Matemati\v ceskoe Ob\v s\v
              cestvo. Uspehi Matemati\v ceskih Nauk},
    VOLUME = {22},
      YEAR = {1967},
    NUMBER = {5(137)},
     PAGES = {107--172},
      ISSN = {0042-1316},
   MRCLASS = {28.70},
  MRNUMBER = {224771},
}

@article {GanShi-chaintransitive,
    AUTHOR = {Gan, Shaobo and Shi, Yi},
     TITLE = {A chain transitive accessible partially hyperbolic
              diffeomorphism which is non-transitive},
   JOURNAL = {Proc. Amer. Math. Soc.},
  FJOURNAL = {Proceedings of the American Mathematical Society},
    VOLUME = {146},
      YEAR = {2018},
    NUMBER = {1},
     PAGES = {223--232},
      ISSN = {0002-9939,1088-6826},
   MRCLASS = {37C20 (37D30)},
  MRNUMBER = {3723135},
MRREVIEWER = {Raquel\ Ribeiro},
       DOI = {10.1090/proc/13708},
       URL = {https://doi.org/10.1090/proc/13708},
}

@article {DPU,
    AUTHOR = {D\'iaz, Lorenzo J. and Pujals, Enrique R. and Ures, Ra\'ul},
     TITLE = {Partial hyperbolicity and robust transitivity},
   JOURNAL = {Acta Math.},
  FJOURNAL = {Acta Mathematica},
    VOLUME = {183},
      YEAR = {1999},
    NUMBER = {1},
     PAGES = {1--43},
      ISSN = {0001-5962,1871-2509},
   MRCLASS = {37D30 (37C20)},
  MRNUMBER = {1719547},
MRREVIEWER = {Marcelo\ Viana},
       DOI = {10.1007/BF02392945},
       URL = {https://doi.org/10.1007/BF02392945},
}

@article {GPS94,
    AUTHOR = {Grayson, Matthew and Pugh, Charles and Shub, Michael},
     TITLE = {Stably ergodic diffeomorphisms},
   JOURNAL = {Ann. of Math. (2)},
  FJOURNAL = {Annals of Mathematics. Second Series},
    VOLUME = {140},
      YEAR = {1994},
    NUMBER = {2},
     PAGES = {295--329},
      ISSN = {0003-486X,1939-8980},
   MRCLASS = {58F11 (58F17 58F30)},
  MRNUMBER = {1298715},
MRREVIEWER = {Remo\ Badii},
       DOI = {10.2307/2118602},
       URL = {https://doi.org/10.2307/2118602},
}

@article {Hopf,
    AUTHOR = {Hopf, Eberhard},
     TITLE = {Statistik der geod\"atischen {L}inien in {M}annigfaltigkeiten
              negativer {K}r\"ummung},
   JOURNAL = {Ber. Verh. S\"achs. Akad. Wiss. Leipzig Math.-Phys. Kl.},
  FJOURNAL = {Berichte \"uber die Verhandlungen der S\"achsischen Akademie
              der Wissenschaften zu Leipzig. Mathematisch-Physische Klasse},
    VOLUME = {91},
      YEAR = {1939},
     PAGES = {261--304},
      ISSN = {0366-0036},
   MRCLASS = {46.3X},
  MRNUMBER = {1464},
MRREVIEWER = {Gustav\ A.\ Hedlund},
}

@article {Anosov67,
    AUTHOR = {Anosov, D. V.},
     TITLE = {Geodesic flows on closed {R}iemannian manifolds of negative
              curvature},
   JOURNAL = {Trudy Mat. Inst. Steklov.},
  FJOURNAL = {Akademiya Nauk SSSR. Trudy Matematicheskogo Instituta imeni V.
              A. Steklova},
    VOLUME = {90},
      YEAR = {1967},
     PAGES = {209},
      ISSN = {0371-9685},
   MRCLASS = {57.36 (28.00)},
  MRNUMBER = {224110},
MRREVIEWER = {M.\ Eisenberg},
}

@article {HHU08invent,
    AUTHOR = {Rodriguez Hertz, F. and Rodriguez Hertz, M. A. and Ures, R.},
     TITLE = {Accessibility and stable ergodicity for partially hyperbolic
              diffeomorphisms with 1{D}-center bundle},
   JOURNAL = {Invent. Math.},
  FJOURNAL = {Inventiones Mathematicae},
    VOLUME = {172},
      YEAR = {2008},
    NUMBER = {2},
     PAGES = {353--381},
      ISSN = {0020-9910,1432-1297},
   MRCLASS = {37D30 (37A25 37C20 37C40)},
  MRNUMBER = {2390288},
MRREVIEWER = {Carlos\ A.\ Morales},
       DOI = {10.1007/s00222-007-0100-z},
       URL = {https://doi.org/10.1007/s00222-007-0100-z},
}

@article {ACW,
    AUTHOR = {Avila, A. and Crovisier, S. and Wilkinson, A.},
     TITLE = {{$C^1$} density of stable ergodicity},
   JOURNAL = {Adv. Math.},
  FJOURNAL = {Advances in Mathematics},
    VOLUME = {379},
      YEAR = {2021},
     PAGES = {Paper No. 107496, 68},
      ISSN = {0001-8708,1090-2082},
   MRCLASS = {37C20 (37C40)},
  MRNUMBER = {4198639},
MRREVIEWER = {Luciana\ Silva\ Salgado},
       DOI = {10.1016/j.aim.2020.107496},
       URL = {https://doi.org/10.1016/j.aim.2020.107496},
}

@article {Feng25,
    AUTHOR = {Feng, Ziqiang},
     TITLE = {Partially hyperbolic dynamics with quasi-isometric center},
   JOURNAL = {Adv. Math.},
  FJOURNAL = {Advances in Mathematics},
    VOLUME = {484},
      YEAR = {2026},
     PAGES = {Paper No. 110697},
      ISSN = {0001-8708,1090-2082},
   MRCLASS = {37D30 (57K32)},
  MRNUMBER = {4993398},
       DOI = {10.1016/j.aim.2025.110697},
       URL = {https://doi.org/10.1016/j.aim.2025.110697},
}

@article {HammerlindlUres,
    AUTHOR = {Hammerlindl, Andy and Ures, Ra\'ul},
     TITLE = {Ergodicity and partial hyperbolicity on the 3-torus},
   JOURNAL = {Commun. Contemp. Math.},
  FJOURNAL = {Communications in Contemporary Mathematics},
    VOLUME = {16},
      YEAR = {2014},
    NUMBER = {4},
     PAGES = {1350038, 22},
      ISSN = {0219-1997,1793-6683},
   MRCLASS = {37D30 (37A25)},
  MRNUMBER = {3231058},
MRREVIEWER = {Mohammad\ Soufi},
       DOI = {10.1142/S0219199713500387},
       URL = {https://doi.org/10.1142/S0219199713500387},
}

@article {HamHU20,
    AUTHOR = {Hammerlindl, Andy and Hertz, Jana Rodriguez and Ures, Ra\'ul},
     TITLE = {Ergodicity and partial hyperbolicity on {S}eifert manifolds},
   JOURNAL = {J. Mod. Dyn.},
  FJOURNAL = {Journal of Modern Dynamics},
    VOLUME = {16},
      YEAR = {2020},
     PAGES = {331--348},
      ISSN = {1930-5311,1930-532X},
   MRCLASS = {37D30 (37A25)},
  MRNUMBER = {4182400},
MRREVIEWER = {Carlos\ A.\ Morales},
       DOI = {10.3934/jmd.2020012},
       URL = {https://doi.org/10.3934/jmd.2020012},
}

@misc{FU-noperiodic,
      title={Accessibility and Ergodicity of Partially Hyperbolic Diffeomorphisms without Periodic Points}, 
      author={Ziqiang Feng and Raúl Ures},
      year={2025},
      eprint={2404.07062},
      archivePrefix={arXiv},
      primaryClass={math.DS},
      url={https://arxiv.org/abs/2404.07062}, 
}

@misc{FU-id,
      title={Partially hyperbolic diffeomorphisms homotopic to the identity in dimension three}, 
      author={Ziqiang Feng and Raúl Ures},
      year={2025},
      eprint={2506.00405},
      archivePrefix={arXiv},
      primaryClass={math.DS},
      url={https://arxiv.org/abs/2506.00405}, 
}

@article {CHHU18,
    AUTHOR = {Carrasco, Pablo D. and Rodriguez-Hertz, Federico and
              Rodriguez-Hertz, Jana and Ures, Ra\'ul},
     TITLE = {Partially hyperbolic dynamics in dimension three},
   JOURNAL = {Ergodic Theory Dynam. Systems},
  FJOURNAL = {Ergodic Theory and Dynamical Systems},
    VOLUME = {38},
      YEAR = {2018},
    NUMBER = {8},
     PAGES = {2801--2837},
      ISSN = {0143-3857,1469-4417},
   MRCLASS = {37D30},
  MRNUMBER = {3868015},
MRREVIEWER = {Abbas\ Fakhari},
       DOI = {10.1017/etds.2016.142},
       URL = {https://doi.org/10.1017/etds.2016.142},
}

@inproceedings {Potrie18,
    AUTHOR = {Potrie, Rafael},
     TITLE = {Robust dynamics, invariant structures and topological
              classification},
 BOOKTITLE = {Proceedings of the {I}nternational {C}ongress of
              {M}athematicians---{R}io de {J}aneiro 2018. {V}ol. {III}.
              {I}nvited lectures},
     PAGES = {2063--2085},
 PUBLISHER = {World Sci. Publ., Hackensack, NJ},
      YEAR = {2018},
      ISBN = {978-981-3272-92-7; 978-981-3272-87-3},
   MRCLASS = {37D30 (37C20 57R30)},
  MRNUMBER = {3966842},
}

@article {BGH-example,
    AUTHOR = {Bonatti, Christian and Gogolev, Andrey and Potrie, Rafael},
     TITLE = {Anomalous partially hyperbolic diffeomorphisms {II}: stably
              ergodic examples},
   JOURNAL = {Invent. Math.},
  FJOURNAL = {Inventiones Mathematicae},
    VOLUME = {206},
      YEAR = {2016},
    NUMBER = {3},
     PAGES = {801--836},
      ISSN = {0020-9910,1432-1297},
   MRCLASS = {37D30 (37C15)},
  MRNUMBER = {3573973},
MRREVIEWER = {Yun\ Yang},
       DOI = {10.1007/s00222-016-0663-7},
       URL = {https://doi.org/10.1007/s00222-016-0663-7},
}

@article {FP-gafa,
    AUTHOR = {Fenley, Sergio R. and Potrie, Rafael},
     TITLE = {Partial hyperbolicity and pseudo-{A}nosov dynamics},
   JOURNAL = {Geom. Funct. Anal.},
  FJOURNAL = {Geometric and Functional Analysis},
    VOLUME = {34},
      YEAR = {2024},
    NUMBER = {2},
     PAGES = {409--485},
      ISSN = {1016-443X,1420-8970},
   MRCLASS = {37D30 (37C15 37D20 57K32 57R30)},
  MRNUMBER = {4715368},
MRREVIEWER = {Rafael\ Oswaldo\ Ruggiero},
       DOI = {10.1007/s00039-024-00670-1},
       URL = {https://doi.org/10.1007/s00039-024-00670-1},
}

@article {Wilkinson13,
    AUTHOR = {Wilkinson, Amie},
     TITLE = {The cohomological equation for partially hyperbolic
              diffeomorphisms},
   JOURNAL = {Ast\'erisque},
  FJOURNAL = {Ast\'erisque},
    NUMBER = {358},
      YEAR = {2013},
     PAGES = {75--165},
      ISSN = {0303-1179,2492-5926},
      ISBN = {978-2-85629-778-0},
   MRCLASS = {37D30 (37A20 37A50 37C40 37D25)},
  MRNUMBER = {3203217},
MRREVIEWER = {Thomas\ Ward},
}

@article {DWX-centralizer,
    AUTHOR = {Damjanovi\'c, Danijela and Wilkinson, Amie and Xu, Disheng},
     TITLE = {Pathology and asymmetry: centralizer rigidity for partially
              hyperbolic diffeomorphisms},
   JOURNAL = {Duke Math. J.},
  FJOURNAL = {Duke Mathematical Journal},
    VOLUME = {170},
      YEAR = {2021},
    NUMBER = {17},
     PAGES = {3815--3890},
      ISSN = {0012-7094,1547-7398},
   MRCLASS = {37D30},
  MRNUMBER = {4340725},
MRREVIEWER = {Radu\ Saghin},
       DOI = {10.1215/00127094-2021-0053},
       URL = {https://doi.org/10.1215/00127094-2021-0053},
}

@inproceedings {Wilkinson10,
    AUTHOR = {Wilkinson, Amie},
     TITLE = {Conservative partially hyperbolic dynamics},
 BOOKTITLE = {Proceedings of the {I}nternational {C}ongress of
              {M}athematicians. {V}olume {III}},
     PAGES = {1816--1836},
 PUBLISHER = {Hindustan Book Agency, New Delhi},
      YEAR = {2010},
      ISBN = {978-81-85931-08-3; 978-981-4324-33-5; 981-4324-33-7},
   MRCLASS = {37D30 (37C40 37D40)},
  MRNUMBER = {2827868},
MRREVIEWER = {Ali\ Tahzibi},
}

@article {HHTU12,
    AUTHOR = {Rodriguez Hertz, F. and Rodriguez Hertz, M. A. and Tahzibi, A.
              and Ures, R.},
     TITLE = {Maximizing measures for partially hyperbolic systems with
              compact center leaves},
   JOURNAL = {Ergodic Theory Dynam. Systems},
  FJOURNAL = {Ergodic Theory and Dynamical Systems},
    VOLUME = {32},
      YEAR = {2012},
    NUMBER = {2},
     PAGES = {825--839},
      ISSN = {0143-3857,1469-4417},
   MRCLASS = {37D30 (37C40)},
  MRNUMBER = {2901373},
MRREVIEWER = {Viorel\ Ni\c tic\u a},
       DOI = {10.1017/S0143385711000757},
       URL = {https://doi.org/10.1017/S0143385711000757},
}

@article {VY13,
    AUTHOR = {Viana, Marcelo and Yang, Jiagang},
     TITLE = {Physical measures and absolute continuity for one-dimensional
              center direction},
   JOURNAL = {Ann. Inst. H. Poincar\'e{} C Anal. Non Lin\'eaire},
  FJOURNAL = {Annales de l'Institut Henri Poincar\'e{} C. Analyse Non
              Lin\'eaire},
    VOLUME = {30},
      YEAR = {2013},
    NUMBER = {5},
     PAGES = {845--877},
      ISSN = {0294-1449,1873-1430},
   MRCLASS = {37D25 (37C40)},
  MRNUMBER = {3103173},
       DOI = {10.1016/j.anihpc.2012.11.002},
       URL = {https://doi.org/10.1016/j.anihpc.2012.11.002},
}

@misc{CP-invariance,
      title={Invariance principle and non-compact center foliations}, 
      author={Sylvain Crovisier and Mauricio Poletti},
      year={2023},
      eprint={2210.14989},
      archivePrefix={arXiv},
      primaryClass={math.DS},
      url={https://arxiv.org/abs/2210.14989}, 
}

@article {GSXZ22,
    AUTHOR = {Gan, Shaobo and Shi, Yi and Xu, Disheng and Zhang, Jinhua},
     TITLE = {Centralizers of derived-from-{A}nosov systems on {$\Bbb T^3$}:
              rigidity versus triviality},
   JOURNAL = {Ergodic Theory Dynam. Systems},
  FJOURNAL = {Ergodic Theory and Dynamical Systems},
    VOLUME = {42},
      YEAR = {2022},
    NUMBER = {9},
     PAGES = {2841--2865},
      ISSN = {0143-3857,1469-4417},
   MRCLASS = {37D30 (37C05 37C15 37C85)},
  MRNUMBER = {4461694},
       DOI = {10.1017/etds.2021.63},
       URL = {https://doi.org/10.1017/etds.2021.63},
}

@article {BHHTU08,
    AUTHOR = {Burns, Keith and Hertz, Federico Rodriguez and Hertz, Mar\'ia
              Alejandra Rodriguez and Talitskaya, Anna and Ures, Ra\'ul},
     TITLE = {Density of accessibility for partially hyperbolic
              diffeomorphisms with one-dimensional center},
   JOURNAL = {Discrete Contin. Dyn. Syst.},
  FJOURNAL = {Discrete and Continuous Dynamical Systems},
    VOLUME = {22},
      YEAR = {2008},
    NUMBER = {1-2},
     PAGES = {75--88},
      ISSN = {1078-0947,1553-5231},
   MRCLASS = {37D30 (37C05)},
  MRNUMBER = {2410948},
MRREVIEWER = {Carlos\ A.\ Morales},
       DOI = {10.3934/dcds.2008.22.75},
       URL = {https://doi.org/10.3934/dcds.2008.22.75},
}

@incollection {DolgopyatWilkinson03,
    AUTHOR = {Dolgopyat, Dmitry and Wilkinson, Amie},
     TITLE = {Stable accessibility is {$C^1$} dense},
      NOTE = {Geometric methods in dynamics. II},
   JOURNAL = {Ast\'erisque},
  FJOURNAL = {Ast\'erisque},
    NUMBER = {287},
      YEAR = {2003},
     PAGES = {xvii, 33--60},
      ISSN = {0303-1179,2492-5926},
   MRCLASS = {37D30 (37C20 37J10)},
  MRNUMBER = {2039999},
MRREVIEWER = {Lorenzo\ J.\ D\'iaz},
}

@incollection {AV20,
    AUTHOR = {Avila, Artur and Viana, Marcelo},
     TITLE = {Stable accessibility with 2-dimensional center},
      NOTE = {Some aspects of the theory of dynamical systems: a tribute to
              Jean-Christophe Yoccoz. Vol. II},
   JOURNAL = {Ast\'erisque},
  FJOURNAL = {Ast\'erisque},
    NUMBER = {416},
      YEAR = {2020},
     PAGES = {301--320},
      ISSN = {0303-1179,2492-5926},
      ISBN = {978-2-85629-917-3},
   MRCLASS = {37D30 (37C86)},
  MRNUMBER = {4142463},
MRREVIEWER = {Jana\ Rodriguez\ Hertz},
       DOI = {10.24033/ast},
       URL = {https://doi.org/10.24033/ast},
}

@misc{FU-noperiodic2,
      title={Accessibility and central integrability in the absence of periodic points}, 
      author={Ziqiang Feng and Raúl Ures},
      year={2025},
      eprint={2511.00853},
      archivePrefix={arXiv},
      primaryClass={math.DS},
      url={https://arxiv.org/abs/2511.00853}, 
}
\appendix

\section{Chain transitivity}\label{subsec-chain-transitive}

We provide some equivalence statements of chain transitivity for homeomorphisms and flows.

Let $X$ be a connected complete metric space and $f:X\to X$ be a continuous map. For $f$ and two points $x,y\in X$, an \emph{$(\epsilon, f)$-chain} from $x$ to $y$ is a finite sequence $\{x_i\}_{i=0}^n$ with $x_0=x$ and $x_n=y$ such that $d(f(x_i),x_{i+1})\leq \epsilon$ for each $i=0,\dots,n-1$. When the dynamics are clear from context, we simply say an \emph{$\epsilon$-chain}.

Given a point $x\in X$, the \emph{chain transitive class} of $x$ is the set of all points $y\in X$ such that for every $\epsilon>0$, there exists an $\epsilon$-chain from $x$ to $y$. By continuity, one can verify that chain transitive classes are equivalence classes with respect to the chain relation. Moreover, each chain transitive class forms a closed subset of $X$. The map $f$ is called \emph{chain transitive} if for any $x,y\in X$ and any $\epsilon>0$, there exists an $\epsilon$-chain from $x$ to $y$; equivalently, $X$ itself is the unique chain transitive class. We say $f$ is \emph{chain recurrent} if for every $x\in X$ and every $\epsilon>0$, there exists a non-trivial $\epsilon$-chain from $x$ to itself.

\begin{lem}\label{equ-chain-map}
    Let $f:X\to X$ be a continuous map of a connected complete metric space. Then, the following statements are equivalent:
    \begin{enumerate}
        \item $f$ is chain recurrent;
        \item $f$ is chain transitive;
        \item any open set $U\subset X$ with $f(\overline{U})\subset U$ is either $\emptyset$ or $X$.
    \end{enumerate}
\end{lem}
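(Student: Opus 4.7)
The plan is to establish the equivalence via the cycle $(2)\Rightarrow(1)\Rightarrow(3)\Rightarrow(2)$. The implication $(2)\Rightarrow(1)$ is immediate from the definitions, since chain transitivity in particular supplies $\epsilon$-chains from every point back to itself.

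For $(3)\Rightarrow(2)$, I would fix $x\in X$ and $\epsilon>0$ and consider the set $U_\epsilon(x)$ of all points reachable from $x$ by some $\epsilon$-chain. This set is non-empty since the length-one chain $(x,f(x))$ witnesses $f(x)\in U_\epsilon(x)$. The two properties I need in order to invoke (3) are: that $U_\epsilon(x)$ is open, which follows from the observation that appending $f(y)$ (at zero cost) and then any point close to $f^2(y)$ to any chain ending at $y$ still produces an $\epsilon$-chain, so $U_\epsilon(x)$ contains an open ball around each $f(y)$ with $y\in U_\epsilon(x)$, and a standard $\epsilon/2$-slack adjustment on the last step promotes this to openness around $y$ itself; and that $f(\overline{U_\epsilon(x)})\subset U_\epsilon(x)$, because for any $z\in\overline{U_\epsilon(x)}$ continuity of $f$ lets me pick $y\in U_\epsilon(x)$ with $d(f(y),f(z))\leq\epsilon$, so appending $f(z)$ to a chain ending at $y$ produces an $\epsilon$-chain to $f(z)$. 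Hypothesis (3) then forces $U_\epsilon(x)=X$, which is exactly chain transitivity since $x$ and $\epsilon$ were arbitrary.

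For $(1)\Rightarrow(3)$, I argue by contradiction: suppose $U$ is a proper non-empty open set with $f(\overline{U})\subset U$. Connectedness of $X$ produces a boundary point $z\in\partial U$, so $z\in\overline{U}\setminus U$ and $f(z)\in U$. In the compact setting relevant to this paper, $f(\overline{U})$ is a compact subset of the open set $U$, hence $\eta:=d(f(\overline{U}),X\setminus U)>0$. For any $\epsilon<\eta$ and any $\epsilon$-chain $z=x_0,x_1,\ldots,x_n$, a short induction yields $x_i\in U$ for all $i\geq 1$: if $x_i\in\overline{U}$ then $f(x_i)\in f(\overline{U})$ sits at distance at least $\eta>\epsilon$ from $X\setminus U$, forcing $x_{i+1}\in U$. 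In particular no $\epsilon$-chain from $z$ can return to $z$, contradicting chain recurrence at $z$.

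The main technical nuisance will be confirming openness and forward invariance of $U_\epsilon(x)$ under the non-strict convention $d(f(x_i),x_{i+1})\leq\epsilon$ adopted in the chain definition; this is routine once one allows a small slack in the last step of the chain. The use of compactness to extract the uniform gap $\eta$ in $(1)\Rightarrow(3)$ is automatic on the closed manifolds treated in the body of the paper, and in a general complete metric setting one adapts the argument by restricting attention to a bounded neighborhood along the chain under consideration.
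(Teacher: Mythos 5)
Your route differs from the paper's: you close the cycle $(2)\Rightarrow(1)\Rightarrow(3)\Rightarrow(2)$, proving all three links from scratch, whereas the paper only proves $(1)\Leftrightarrow(2)$ directly (by showing that under chain recurrence each chain transitive class $\mathcal{R}(x)$ is open as well as closed, splicing recurrent loops at a fine chain of intermediate points between $y$ and a nearby $z$) and outsources $(2)\Leftrightarrow(3)$ to Crovisier--Potrie. Your $(3)\Rightarrow(2)$ is essentially the standard argument and is fine once one small point is repaired: with the non-strict convention $d(f(x_i),x_{i+1})\le\epsilon$, the reachable set $U_\epsilon(x)$ is genuinely \emph{not} open in general (the endpoint of a chain can sit on the sphere of radius $\epsilon$ about $f(x_{n-1})$; for $f(t)=t+1$ on $\mathbb{R}$ the set $U_\epsilon(0)$ is a union of closed intervals). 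The correct object to feed into (3) is $W:=\bigcup_{y\in U_\epsilon(x)\cup\{x\}}B\bigl(f(y),\epsilon\bigr)$, which is open, non-empty, contained in $U_\epsilon(x)$, and satisfies $f(\overline{W})\subset W$ by exactly the pointwise-continuity argument you give for forward invariance; then $W=X$ forces $U_\epsilon(x)=X$. You flagged this nuisance and the fix is routine, so I do not count it as a gap.

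The substantive issue is $(1)\Rightarrow(3)$. Your argument hinges on the uniform gap $\eta=d\bigl(f(\overline{U}),X\setminus U\bigr)>0$, which you obtain from compactness of $f(\overline{U})$. The lemma is stated for a connected \emph{complete} metric space, where $\overline{U}$ need not be compact and this infimum can vanish; your proposed repair (``restrict attention to a bounded neighborhood along the chain'') does not work, both because bounded closed sets need not be compact in a complete metric space and, more seriously, because the compact set containing a given chain depends on the chain, while $\eta$ must be chosen before $\epsilon$ -- the argument becomes circular. Since in your scheme the paper's main content, $(1)\Rightarrow(2)$, is reached only through $(1)\Rightarrow(3)$, your proof of the full equivalence is complete only when $X$ is compact. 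This covers every application in the paper (closed manifolds), so the damage is contextual rather than fatal, but it is a real loss of generality relative to the statement: the paper's direct proof of $(1)\Rightarrow(2)$ uses only uniform continuity of $f$ on a small ball $B_\delta(y)$ (available on any locally compact space), not a global separation constant for a trapping region. If you want to keep your cycle, either add a compactness hypothesis or reorder the implications so that $(1)\Rightarrow(2)$ is proved directly in the paper's local fashion and $(3)$ is only ever derived from $(2)$ where compactness is available.
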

\begin{proof}
    The equivalence of (2) and (3) can be found in \cite{CrovisierPotrie}. Here, we only present the equivalence of (1) and (2). Obviously the map $f$ is chain recurrent whenever it is chain transitive. Let us assume the chain recurrent property to show the chain transitivity. We first note that for each $x\in X$, the chain transitive class $\mathcal{R}(x)$ is a closed subset of $X$. It suffices to show that $\mathcal{R}(x)$ is open. 

    Let $y\in\mathcal{R}(x)$ and $z\in B_{\delta}(y)$ for a given small $\delta>0$. For any $\epsilon\geq 2\delta$, there is an $\epsilon$-chain $\{x_i\}_{i=1}^n$ from $x$ to $y$. As $f$ is chain recurrent, there is a $\delta$-chain $\{y_i\}_{i=0}^m$ from $y$ to itself. It follows that
    $$
    d(f(y_{m-1}),z)\leq d(f(y_{m-1}),y_m=y)+d(y,z)\leq 2\delta\leq \epsilon. $$
    Thus, the sequence $\{x=x_0, x_1, \dots,x_n, y=y_0,\dots, y_{m-1},z\}$ forms an $\epsilon$-chain from $x$ to $z$.

    Now we consider any small $\epsilon<2\delta$. By continuity, there exists $0<\epsilon'\leq \epsilon/4$ such that for any $u,v\in B_{\delta}(y)$ with $d(u,v)\leq 2\epsilon'$, we have $d(f(u), f(v))\leq \epsilon/2$. Pick a finite sequence of points $\{z_i\}_{i=0}^{k}\subset B_{\delta}(y)$ such that $y=z_0$ and $d(z_i,z_{i+1})\leq \epsilon'$, where we denote $z=z_{k+1}$. Since $f$ is chain recurrent, there are $\epsilon'$-chains $\{z_i^j\}_{j=0}^{n_i}$ with $z_i^0=z_i=z_i^{n_i}$ and $d(f(z_i^j),z_i^{j+1})\leq \epsilon'$ for $i=1,\dots, k$. By the choice of $\epsilon'$, we have that $d(f(y),f(z_1))\leq \epsilon/2$. It implies that
    $$
    d(f(y),z_1^1)\leq d(f(y),f(z_1))+d(f(z_1), z_1^1)\leq \epsilon/2+\epsilon'\leq \epsilon.
    $$
    Moreover, we have 
    $$
    d(f(z_1^{n_1-1}),z_2)\leq d(f(z_1^{n_1-1}),z_1^{n_1}=z_1)+d(z_1,z_2)\leq \epsilon'+\epsilon'=2\epsilon'.
    $$
    Then, we can deduce 
    $$
    d(f^2(z_1^{n_1-1}),z_2^1)\leq d(f^2(z_1^{n_1-1}),f(z_2))+d(f(z_2),z_2^1)\leq \epsilon/2+\epsilon'\leq \epsilon.
    $$
    One can proceed this argument inductively to conclude that the sequence 
    $$
    y,z_1^1, \dots, z_1^{n_1-1}, f(z_1^{n_1-1}), z_2^1, \dots, z_2^{n_2-1}, f(z_2^{n_2-1}), z_3^1, \dots, z_k^{n_k-1}, z
    $$
    is an $(\epsilon/2+\epsilon')$-chain. In particular, it is an $\epsilon$-chain. Therefore, the sequence
    $$
    x=x_0, x_1,\dots, x_n, y,z_1^1, \dots, z_1^{n_1-1}, f(z_1^{n_1-1}), z_2^1, \dots, z_2^{n_2-1}, f(z_2^{n_2-1}), z_3^1, \dots, z_k^{n_k-1}, z
    $$
    produces an $\epsilon$-chain from $x$ to $z$, which concludes $z\in\mathcal{R}(x)$. Hence, we finish the proof.
\end{proof}

We can define analogous notions of dynamical recurrence for flows. Let $X$ be a connected complete metric space and $\phi_t:X\to X$ be a continuous flow generated by a continuous non-singular vector field $\frac{\partial\phi_t}{\partial t}|_{t=0}$. For any two points $x,y\in X$, an \emph{$(\epsilon,\phi_t)$-chain} from $x$ to $y$ is defined as a finite sequence of tuples $\{(x_i,t_i)\}_{i=0}^n$ with $x_0=x$, $x_n=y$, and $t_i\geq 0$ such that $d(\phi_{t_i}(x_i),x_{i+1})\leq \epsilon$ for each $i=0,\dots, n-1$. When the flow $\phi_t$ is clear from context, we simply say an $\epsilon$-chain. Using the notion of $\epsilon$-chain for a flow, analogous to the case of maps, we can define the \emph{chain transitive class}, \emph{chain transitive flow}, and \emph{chain recurrent flow}.

We can state a similar result for flows as in Lemma~\ref{equ-chain-map}.
\begin{lem}\label{equ-chain-flow}
    Let $\phi_t: X\to X$ be a continuous flow on a connected complete metric space. Then, the following statements are equivalent:
    \begin{enumerate}
        \item $\phi_t$ is chain recurrent;
        \item $\phi_t$ is chain transitive;
        \item any open set $U\subset X$ such that $\phi_{t_0}(\overline{U})\subset U$ for some $t_0>0$ is either $\emptyset$ or $X$.
    \end{enumerate}
\end{lem}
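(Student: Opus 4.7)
The plan is to adapt the argument of Lemma~\ref{equ-chain-map} to the flow setting. The equivalence (2)$\iff$(3) can be obtained by translating the chain-attractor characterization of \cite{CrovisierPotrie} to flows: any proper open set $U$ satisfying $\phi_{t_0}(\overline{U})\subset U$ eventually absorbs every $\epsilon$-chain for $\epsilon$ small enough that the $\epsilon$-neighborhood of $\phi_{t_0}(\overline{U})$ is contained in $U$, contradicting chain transitivity unless $U$ is trivial, while conversely a failure of chain transitivity yields such a trapping region through the standard construction via the forward chain-hull of any chain transitive class. The implication (2)$\Rightarrow$(1) is immediate, so the core of the lemma is (1)$\Rightarrow$(2).

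For (1)$\Rightarrow$(2), I would fix $x\in X$ and show that the chain transitive class $\mathcal{R}(x)$ is both closed and open, hence equal to $X$ by connectedness. Closedness follows from continuity of $\phi_t$ as in the map case: if $y_n\in \mathcal{R}(x)$ and $y_n\to y$, any $\epsilon/2$-chain from $x$ to a sufficiently close $y_n$ extends to an $\epsilon$-chain from $x$ to $y$ by perturbing the final endpoint. Openness is where chain recurrence plays a crucial role and where the continuous time parameter introduces the main subtlety.

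Fix $y\in\mathcal{R}(x)$ and choose $\delta>0$ small enough that $B_{2\delta}(y)$ lies in a compact set $K$. For any $z\in B_\delta(y)$ and any target $\epsilon>0$, I would construct an $\epsilon$-chain from $x$ to $z$. When $\epsilon\geq 2\delta$, I concatenate an $\epsilon$-chain from $x$ to $y$ with a non-trivial $\delta$-chain from $y$ to itself (supplied by chain recurrence) and replace the final endpoint $y$ by $z$; the terminal gap is at most $\delta+\delta\leq \epsilon$. When $\epsilon<2\delta$, I mimic the map proof: pick $\epsilon'\leq \epsilon/4$ and a bounded time range $[0,T]$ such that $d(u,v)\leq 2\epsilon'$ with $u,v\in K$ implies $d(\phi_s(u),\phi_s(v))\leq \epsilon/2$ for every $s\in[0,T]$; select intermediate points $z_0=y,z_1,\dots,z_{k+1}=z$ in $B_\delta(y)$ with consecutive distance $\leq\epsilon'$; for each $z_i$ use chain recurrence to insert a non-trivial $\epsilon'$-loop through $z_i$ whose times lie in $[0,T]$; then glue the loops exactly as in the map case. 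The telescoping triangle-inequality verification is identical, controlling each gluing step by $\epsilon/2+\epsilon'\leq\epsilon$, and prepending the $x$-to-$y$ chain produces the desired $\epsilon$-chain from $x$ to $z$.

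The main obstacle, absent in the discrete-time setting, is managing uniform continuity: a flow chain uses a different time $t_i$ at each step, so one cannot appeal to continuity of a single map. I plan to resolve this by arranging all chain steps in the gluing construction to use times from a fixed bounded interval $[0,T]$ on which $\phi_s$ is uniformly continuous over $K$; any long recurrent flow segment can be subdivided into several short ones without damaging the $\epsilon'$-chain property, since sampling a trajectory along itself satisfies the chain inequality trivially. An alternative route would be to reduce to the time-$t_0$ map and invoke Lemma~\ref{equ-chain-map} directly, but transferring chain recurrence from the flow to $\phi_{t_0}$ is itself a subtle step that requires comparable care.
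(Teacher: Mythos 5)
Your proposal is correct and follows essentially the same route as the paper: the paper also reduces Lemma~\ref{equ-chain-flow} to the argument of Lemma~\ref{equ-chain-map}, handling (2)$\Leftrightarrow$(3) via the chain-attractor characterization and proving (1)$\Rightarrow$(2) by showing each chain transitive class is clopen, with the continuous-time subtlety resolved exactly as you describe — choosing $\epsilon'$ via uniform continuity of $\phi_t$ over a fixed bounded interval $[0,T]$ and subdividing any flow segment with $t_i>T$ into finitely many pieces of time at most $T$. Your observation that sampling a trajectory along itself trivially satisfies the chain inequality is precisely the justification the paper relies on for that subdivision step.
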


The proof follows the same argument as in Lemma~\ref{equ-chain-map} with minor modifications. We only note that in the proof of Lemma~\ref{equ-chain-map}, we used the continuity of $f$ in one step to obtain the number $\epsilon'$. In the flow case, we fix $T>0$ and use the continuity of the flow on the interval $[0,T]$ to choose $\epsilon'$ such that for any $u,v\in B_{\delta}(y)$ with $d(u,v)\leq 2\epsilon'$, we have $\max\limits_{0\leq t\leq T}d(\phi_t(u),\phi_t(v))\leq \epsilon/2$. Given $T>0$, if $t_i>T$ for some pair $(x_i,t_i)$ in a chain, we can subdivide the flow segment $\phi_{[0,t_i]}(x_i)$ into finitely many pieces $\phi_{[0,t_i^1]}(x_i^1),\phi_{[0, t_i^2]}(x_i^2), \dots, \phi_{[0,t_i^k]}(x_i^k)$ such that $x_i^j=\phi_{\sum_{s=1}^j t_i^s}(x_i)$, $\sum_{s=1}^k t_i^s = t_i$, and $t_i^s \leq T$ for $s,j \in \{1,\dots, k\}$. We omit the proof of Lemma~\ref{equ-chain-flow}.

When a flow exhibits hyperbolicity, the characterization of chain transitivity can be strengthened as follows.

\begin{lem}\label{chain-transitive}
    For any (topological) Anosov flow $\phi_t$ on a closed manifold, the following statements are equivalent:
    \begin{enumerate}
        \item $\phi_t$ is chain transitive;
        \item its weak-stable and weak-unstable foliations are minimal;
        \item its non-wandering set is the whole manifold;
        \item $\phi_t$ is transitive.
    \end{enumerate}
\end{lem}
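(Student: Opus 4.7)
The plan is to prove the four-way equivalence by establishing the cycle $(4) \Rightarrow (3) \Rightarrow (1) \Rightarrow (4)$ together with the side equivalence $(2) \Leftrightarrow (4)$. The implications $(4) \Rightarrow (3)$ and $(4) \Rightarrow (1)$ are immediate: a dense forward orbit makes every point non-wandering, and a genuine orbit segment constitutes a trivial $\epsilon$-chain for every $\epsilon > 0$.

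For $(3) \Rightarrow (4)$, I would invoke the spectral decomposition theorem for (topological) Anosov flows, which writes $NW(\phi_t)$ as a finite disjoint union of closed, $\phi_t$-invariant basic sets, each carrying a transitive restriction. Applying this with $NW(\phi_t) = M$ and using connectedness of $M$, the decomposition collapses to a single basic set equal to $M$, whence $\phi_t$ is transitive. For $(1) \Rightarrow (3)$, I would appeal to the shadowing property of (topological) Anosov flows, which forces $CR(\phi_t) = NW(\phi_t)$. Since chain transitivity gives $CR(\phi_t) = M$, we obtain $NW(\phi_t) = M$. A more elementary alternative uses Lemma~\ref{equ-chain-flow}(3): a wandering point would produce an open set $U$ with $\phi_{t_0}(\overline{U}) \subset U$ and $\emptyset \neq U \neq M$, contradicting chain transitivity.

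For the side equivalence $(2) \Leftrightarrow (4)$, I would proceed via local product structure. For $(4) \Rightarrow (2)$, the orbit of a transitivity point $x_0$ is contained in its weak-stable leaf, so $W^{ws}(x_0)$ is dense; for an arbitrary $y$, the closure $\overline{W^{ws}(y)}$ is closed and weak-stable saturated by construction, and using unstable holonomies through a nearby piece of $W^{ws}(x_0)$ one shows it is also weak-unstable saturated. A subset that is saturated by both weak foliations is open (by local product structure) and closed, hence equal to $M$ by connectedness; the analogous argument for $W^{wu}$ proceeds by applying the above to the reverse flow $\phi_{-t}$. The converse $(2) \Rightarrow (4)$ combines density of weak-unstable leaves with the local product structure: given open $U, V$, one finds $y \in W^{wu}(x) \cap V$ for some $x \in U$, and the resulting backward tracking along the unstable manifold, together with density of weak-stable leaves, yields the desired orbit connection between $U$ and $V$.

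The main obstacle is the side equivalence $(2) \Leftrightarrow (4)$, because passing from density of a single leaf to minimality of the entire foliation requires careful manipulation of the hyperbolic product structure. In the topological Anosov setting the weak foliations are only continuous and smoothness of the holonomies is unavailable, so the saturation arguments must be carried out using the continuous local product structure and the topological expansiveness of $\phi_t$, rather than any differentiable holonomy.
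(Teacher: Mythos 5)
Your overall logical scheme closes (you establish $(1)\Rightarrow(3)\Rightarrow(4)$, $(4)\Rightarrow(1)$, $(4)\Rightarrow(3)$, and $(2)\Leftrightarrow(4)$), and it is genuinely different from the paper's route, which runs the cycle $(4)\Rightarrow(3)\Rightarrow(1)\Rightarrow(2)\Rightarrow(4)$ and never touches shadowing or spectral decomposition. However, two of your steps have real problems. First, your ``more elementary alternative'' for $(1)\Rightarrow(3)$ is false as stated: the existence of a wandering point does \emph{not} produce an open set $U$ with $\phi_{t_0}(\overline{U})\subset U$ and $\emptyset\neq U\neq M$. Non-wandering and chain recurrence genuinely differ here --- a circle homeomorphism with a single fixed point and all other orbits wandering past it is chain transitive yet has $NW\neq M$ --- so chain transitivity alone does not exclude wandering points; only the hyperbolicity can. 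Your primary route via shadowing (so that $CR=\overline{Per}=NW$) is correct for smooth Anosov flows, but the lemma is stated for \emph{topological} Anosov flows, where shadowing and the spectral decomposition are not free: they must either be built into the definition or re-derived. The paper sidesteps this entirely by never proving $(1)\Rightarrow(3)$ or $(3)\Rightarrow(4)$ directly; instead it shows $(1)\Rightarrow(2)$ using only the foliations and the trapping-region criterion of Lemma~\ref{equ-chain-flow}(3): if some weak-unstable leaf were not dense, the $\epsilon$-thickening $Q=W^{ws}_\epsilon(\overline{W^{wu}(x)})$ of its closure along local weak-stable plaques would be a proper open set with $\phi_{t_0}(\overline{Q})\subset Q$ (the flow contracts the stable direction), contradicting chain transitivity.

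Second, your argument for $(4)\Rightarrow(2)$ has a gap at the saturation step. The closure $\overline{W^{ws}(y)}$ of a single weak-stable leaf is weak-stable saturated, but it is \emph{not} automatically weak-unstable saturated: if $z_n\in W^{ws}(y)$ converge to $z$, continuity of the foliations only produces points of $W^{wu}(z_n)$ converging to a given point of $W^{wu}(z)$, and those points need not lie in $W^{ws}(y)$. (For a non-transitive Anosov flow the closure of a stable leaf in a repelling basic set is visibly not unstable-saturated, so any proof must use transitivity in an essential way, which your holonomy sketch does not make explicit.) This classical minimality statement is true but requires a genuine argument; in the paper's scheme it is not needed because $(2)$ is reached from $(1)$ rather than from $(4)$. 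Your sketch of $(2)\Rightarrow(4)$ is recoverable but should be made precise along the lines of the paper: compactness gives a uniform $R_\delta$ so that every radius-$R_\delta$ disk in any weak-unstable leaf meets every $\delta$-ball, and forward flowing expands a small unstable disk in $U$ until it has internal radius $R_\delta$ and hence meets $V$; the density of weak-stable leaves you invoke is not needed for this direction.
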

\begin{proof}
    Clearly, we have $(4)\Rightarrow (3)\Rightarrow (1)$. Thus, it suffices to prove the implications $(1)\Rightarrow (2)$ and $(2)\Rightarrow (4)$. Denote by $\mathcal{W}^{wu}$ (resp. $\mathcal{W}^{ws}$) the weak-unstable (resp. weak-stable) foliation of $\phi_t$, and for a point $x\in M$ denote by $W^{wu}(x)\in\mathcal{W}^{wu}$ (resp. $W^{ws}(x)\in\mathcal{W}^{ws}$) the corresponding leaf through $x$.

    First, assume (1) and suppose that $W^{wu}(x)$ is not dense for some $x\in M$. Then its closure $\overline{W^{wu}(x)}$ is a closed $\phi_t$-invariant proper subset that is saturated by $\mathcal{W}^{wu}$. For sufficiently small $\epsilon>0$, the set $Q:=W^{ws}_{\epsilon}(\overline{W^{wu}(x)})=\bigcup_{y\in\overline{W^{wu}(x)}}W^{ws}_{\epsilon}(y)$ is a proper subset of $M$. By transversality of $\mathcal{W}^{wu}$ and $\mathcal{W}^{ws}$, the set $Q$ is open. Moreover, there exists $t_0>0$ such that $\phi_{t_0}(\overline{Q})\subset Q$, which contradicts chain transitivity by Lemma~\ref{equ-chain-flow}. Similarly, if $W^{ws}(x)$ is not dense, then the set $P:=W^{wu}_{\epsilon}(\overline{W^{ws}(x)})$ is a proper open subset for sufficiently small $\epsilon$. Furthermore, there exists $t_0>0$ with $\overline{P}\subset\phi_{t_0}(P)$. This implies that the complement $L:=\overline{P}^c$ is a proper open set satisfying $\phi_{t_0}(\overline{L})\subset L$, again contradicting chain transitivity. Hence, (1) implies (2).

    Now assume (2); that is, the weak-unstable foliation is minimal. Given an arbitrarily small $\delta>0$, there exists a uniform constant $R_{\delta}>0$ such that every disk of radius $R_{\delta}$ in any leaf of $\mathcal{W}^{wu}$ intersects every $\delta$-ball in $M$. Let $U$ and $V$ be arbitrary open sets. Choose $\delta>0$ small enough so that $V$ contains a $\delta$-ball. Take a leaf $F\in\mathcal{W}^{wu}$ intersecting $U$ and let $D\subset U\cap F$ be a disk in $F$. Then there exists $T>0$ such that $\bigcup_{t\geq T}\phi_{t}(D)$ contains a disk of radius $R_{\delta}$ in $F$. By the choice of $R_{\delta}$, this union intersects $V$. Hence, $\phi_t$ is transitive, proving $(2)\Rightarrow (4)$.
\end{proof}

\section{Partial hyperbolicity and Anosov flows}

Here, we revisit another interesting equivalence relation between the existence of transitive partially hyperbolic diffeomorphisms and that of transitive Anosov flows in dimension three, which follows from the result of \cite{FP-classify} (see also Theorem~\ref{CAF}). See \cite[Question 1]{BGH-example}\cite[Question 5.7]{CHHU18}\cite[Question 9]{Potrie18}\cite{FP-gafa,Feng25} for the motivation and some background.

\begin{thm}\label{diffeo<=>flow}\cite{FP-classify}
    Let $M$ be a closed 3-manifold whose fundamental group has exponential growth. Then, $M$ admits a transitive partially hyperbolic diffeomorphism if and only if it supports a transitive Anosov flow.
\end{thm}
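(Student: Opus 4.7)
The plan is to address the two directions separately. The reverse implication is the easier direction: if $M$ supports a transitive Anosov flow $\phi_t$, then the time-one map $\phi_1$ is a partially hyperbolic diffeomorphism. When $\phi_t$ is topologically mixing, $\phi_1$ is itself transitive. In the exceptional case where $\phi_t$ is a constant-roof suspension of an Anosov diffeomorphism $g$, I would replace $\phi_1$ by a model map of the form $g\times R_\alpha$ for irrational $\alpha$ on the corresponding bundle, which is partially hyperbolic and transitive (in fact ergodic).

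For the forward implication, let $f:M\to M$ be a transitive partially hyperbolic diffeomorphism. Since $\pi_1(M)$ has exponential growth, it cannot be virtually nilpotent; in particular, $M$ is not a nilmanifold, which will rule out the DA-on-$\mathbb{T}^3$ alternative in the arguments below. I would split into two subcases according to whether $\pi_1(M)$ is virtually solvable.

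If $\pi_1(M)$ is not virtually solvable, Theorem~\ref{CAF} gives that $f$ is a strong collapsed Anosov flow, which by definition provides a topological Anosov flow $\phi_t$ on $M$ together with a continuous map relating the center foliation of $f$ to the orbit foliation of $\phi_t$. Transitivity of $\phi_t$ would then follow because the image of a dense $f$-orbit lies in a single flow orbit, which must be dense; equivalently, the collapsing map preserves chain-recurrent behavior, and by Lemma~\ref{chain-transitive} chain transitivity of an Anosov flow on a closed 3-manifold is equivalent to topological transitivity.

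If $\pi_1(M)$ is virtually solvable but not virtually nilpotent, then transitivity gives $NW(f)=M$ and Theorem~\ref{NW} applies. The DA-on-$\mathbb{T}^3$ alternative is excluded by the exponential growth hypothesis, so in the remaining alternatives the argument in the proof of Theorem~\ref{NW} (via \cite{HP-sol}) shows that some iterate of $f$, after passing to a finite cover, is a discretized Anosov flow, yielding an Anosov flow on $M$. Moreover, $M$ is finitely covered by a torus bundle over the circle with hyperbolic monodromy, so the resulting Anosov flow is a suspension flow, hence topologically mixing and in particular transitive. The main obstacle I expect is in the non-solvable case, namely faithfully transferring transitivity through the collapsing semiconjugacy of the strong collapsed Anosov flow structure; the chain-recurrence characterization of Lemma~\ref{chain-transitive} is the tool to bridge this gap.
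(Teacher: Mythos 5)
Your reverse implication is essentially the paper's own argument (Proposition~\ref{flow->diffeo}): when the flow is topologically mixing the time-one map is transitive, and in the suspension case one replaces it by a time-$\alpha$ map with $\alpha$ irrational with respect to the periods. That direction is fine.

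The forward implication is where the real content lies, and your sketch has genuine gaps there. First, in the non-virtually-solvable case the semiconjugacy attached to a (strong) collapsed Anosov flow goes the wrong way for your argument: the collapsing map $\beta$ is a surjection, homotopic to the identity, sending each orbit of the topological Anosov flow onto a \emph{center leaf} of $f$ and intertwining $f$ with a reparametrized time-one map of the flow. There is no map carrying $f$-orbits into flow orbits, and the assertion that ``the image of a dense $f$-orbit lies in a single flow orbit'' is false, since $f$-orbits are transverse to the center foliation. Transferring transitivity (or chain transitivity) \emph{backwards} through the non-injective map $\beta$ is precisely the nontrivial step that \cite{FP-classify} carries out to obtain Theorem~\ref{diffeo->flow}; Lemma~\ref{chain-transitive} only upgrades chain transitivity of the flow to transitivity once you already know the flow is chain transitive, which is the point at issue. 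Second, a collapsed Anosov flow a priori yields only a \emph{topological} Anosov flow, whereas the statement asserts that $M$ supports an Anosov flow; closing this gap requires a regularization result (that transitive topological Anosov flows on closed $3$-manifolds are orbit equivalent to smooth ones), which you do not invoke.

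In the virtually solvable case there are two further problems. The claim ``the resulting Anosov flow is a suspension flow, hence topologically mixing'' is backwards: suspensions of Anosov diffeomorphisms are exactly the transitive Anosov flows that are \emph{not} topologically mixing (transitivity is all you need here, so the conclusion survives, but the justification is wrong). More seriously, the discretized Anosov flow is produced only after passing to a finite cover and an iterate, so you still owe an argument that $M$ itself, and not merely its finite cover, carries a (suspension) Anosov flow. Finally, note that the paper does not reprove the forward direction either: it quotes it as Theorem~\ref{diffeo->flow} from \cite{FP-classify} and only supplies a proof of the converse via Proposition~\ref{flow->diffeo}.
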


Indeed, this theorem can actually be strengthened under a weaker assumption (chain transitivity) while yielding a stronger conclusion (the topologically mixing property) as given in Theorem~\ref{diffeo->flow} and Proposition~\ref{flow->diffeo}.

\begin{thm}\label{diffeo->flow}\cite{FP-classify}
    Let $M$ be a closed 3-manifold whose fundamental group has exponential growth. If $M$ admits a chain-transitive partially hyperbolic diffeomorphism $f:M\to M$, then it admits a topologically mixing smooth Anosov flow.
\end{thm}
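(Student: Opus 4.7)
My approach is to extract an Anosov flow from $f$ via Theorem~\ref{CAF}, and then promote the resulting transitivity to topological mixing using the Plante dichotomy for transitive Anosov flows on closed 3-manifolds. Exponential growth of $\pi_1(M)$, via Gromov's theorem, already rules out virtual nilpotence, and for the argument to close we also need to sit outside the virtually solvable regime—the Sol-type case being the one place where a transitive partially hyperbolic diffeomorphism exists but no topologically mixing Anosov flow does. Assuming $\pi_1(M)$ is not virtually solvable, Theorem~\ref{CAF} applied to the chain-transitive $f$ yields a strong collapsed Anosov flow structure, and in particular a smooth Anosov flow $\phi_t:M\to M$ whose orbit space is compatible with the center foliation of $f$.

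The next step is to deduce transitivity of $\phi_t$ from chain-transitivity of $f$. The strong collapsed Anosov flow structure semi-conjugates the orbital dynamics of $f$ to those of $\phi_t$, so chain-recurrence of $f$ transfers to chain-recurrence of $\phi_t$ (a given $\epsilon$-chain for $f$ pushes forward along the collapsing map to an $\epsilon'$-chain for $\phi_t$ after absorbing the bounded flow-box error). Lemma~\ref{chain-transitive} of the appendix then upgrades chain-recurrence of the Anosov flow to transitivity, so $NW(\phi_t)=M$ and $\phi_t$ is transitive in the ordinary sense.

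Finally, I invoke Plante's dichotomy: a transitive Anosov flow on a closed 3-manifold is either topologically mixing or topologically equivalent to the suspension of an Anosov diffeomorphism of $\mathbb{T}^2$. The suspension alternative would force $M$ to be a Sol-type torus bundle over the circle, which is excluded by our standing assumption on $\pi_1(M)$. Hence $\phi_t$ is topologically mixing, and smoothness of the flow is inherited from the smooth Anosov flow model supplied by \cite{FP-classify}.

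The main obstacle I anticipate is precisely the reduction to the non-virtually-solvable setting: exponential growth of $\pi_1(M)$ does not on its own exclude Sol-type manifolds, yet the conclusion of topological mixing fails there (the only Anosov flows on Sol $3$-manifolds are suspensions, which always factor onto a constant-speed rotation of $\mathbb{S}^1$ and are therefore not mixing). Making this reduction rigorous—or reading the exponential growth hypothesis as implicitly excluding the Sol case—is the delicate point. Once it is handled, Theorem~\ref{CAF}, the equivalence lemma of the appendix, and Plante's dichotomy assemble cleanly into the desired conclusion.
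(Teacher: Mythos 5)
The paper does not actually prove this statement: it is imported from \cite{FP-classify} (note the citation attached to the theorem environment), so there is no internal proof to compare yours against line by line. That said, your skeleton --- Theorem~\ref{CAF} to produce a (strong) collapsed Anosov flow, transfer of chain recurrence to the underlying flow, Lemma~\ref{chain-transitive} to upgrade chain transitivity to transitivity, and Plante's dichotomy to get mixing once the suspension case is excluded --- is exactly the intended route, and the obstacle you flag at the end is not merely delicate but decisive. On a Sol-manifold (the mapping torus of a hyperbolic automorphism of $\mathbb{T}^2$), $\pi_1$ has exponential growth by Milnor--Wolf, and the time-one map of the suspension Anosov flow is a chain-recurrent, hence by Lemma~\ref{equ-chain-map} chain-transitive, partially hyperbolic diffeomorphism; yet by Plante every Anosov flow on such a manifold is topologically equivalent to a suspension and therefore not topologically mixing. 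So the statement is false as literally written, and the hypothesis must be read as ``$\pi_1(M)$ not virtually solvable,'' which is what Theorem~\ref{CAF} requires anyway and is the setting of \cite{FP-classify}. You were right not to paper over this; it is a defect of the statement, not of your argument.

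Two further steps in your outline deserve more than a sentence. First, the collapsing map $\beta$ of a collapsed Anosov flow satisfies $f\circ\beta=\beta\circ\mathfrak{h}$ for a self orbit equivalence $\mathfrak{h}$ of $\phi_t$, not for a fixed time-$t_0$ map, so ``pushing an $\epsilon$-chain forward and absorbing a bounded flow-box error'' does not immediately yield chain recurrence of $\phi_t$; a cleaner route is to show that a proper open trapping region $U$ for the flow (as in Lemma~\ref{equ-chain-flow}(3)) pulls back under $\beta$ to a proper open trapping region for $f$, contradicting Lemma~\ref{equ-chain-map}(3). Second, the flow produced by the collapsed Anosov flow structure is a priori only a topological Anosov flow; obtaining a \emph{smooth} topologically mixing Anosov flow requires the orbit-equivalence/smoothing theorem for transitive topological Anosov flows, which is an essential external ingredient and cannot be described as ``inherited'' from the model. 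With the hypothesis corrected to non-virtually-solvable $\pi_1(M)$ and these two points supplied, your argument assembles into a correct proof.
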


Given a transitive Anosov flow, its time-one map is clearly a partially hyperbolic diffeomorphism whose center foliation consists of flow lines. Furthermore, by discretizing a given Anosov flow, one can construct uncountably many distinct partially hyperbolic diffeomorphisms that are leaf conjugate to each other. However, these discretized Anosov flows are not necessarily transitive even if the original Anosov flow is transitive or topologically mixing.

The following proposition asserts that one can always find a transitive partially hyperbolic diffeomorphism (indeed, many such) that is a time-constant map of \emph{any given} chain transitive Anosov flow.

\begin{prop}\label{flow->diffeo}
    Let $\phi_t$ be a chain-transitive Anosov flow on a closed 3-manifold $M$. Then, there exists a constant $\alpha\in\mathbb{R}^+$ such that the time-$\alpha$ map $\phi_{\alpha}$ is transitive. Moreover, if $\phi_t$ is not a suspension flow over an Anosov diffeomorphism on $\mathbb{T}^2$, then $\phi_{\alpha}$ is topologically mixing.
\end{prop}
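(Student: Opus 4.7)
The plan is to split the problem along Plante's classical dichotomy for transitive Anosov flows in dimension three. By Lemma~\ref{chain-transitive}, the chain-transitivity hypothesis on $\phi_t$ upgrades to genuine topological transitivity of the flow, so I may invoke Plante's theorem: a transitive Anosov flow on a closed 3-manifold is either topologically mixing, or else topologically equivalent (after a time change) to a constant-roof suspension of an Anosov diffeomorphism on a compact surface. Since the only compact surface carrying an Anosov diffeomorphism is $\mathbb{T}^2$ (Franks--Newhouse), the second branch is precisely the excluded case in the \emph{moreover} clause.

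\emph{Mixing branch.} Fix any $\alpha > 0$. For open sets $U, V \subset M$, topological mixing of $\phi_t$ gives a $T > 0$ with $\phi_t(U) \cap V \neq \emptyset$ for every $t \geq T$. Taking $t = n\alpha$ for $n \geq T/\alpha$ yields topological mixing of $\phi_\alpha$ as a discrete-time system, in particular transitivity. This settles the mixing case and the entire \emph{moreover} clause.

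\emph{Suspension branch.} After a topological equivalence and time rescaling I may assume $M = M_A = \mathbb{T}^2 \times [0,1]/(x,1)\sim(Ax,0)$ with $\phi_t$ the upward translation. Choose $\alpha \in \mathbb{R}^+ \setminus \mathbb{Q}$. The projection $\pi : M_A \to \mathbb{S}^1$ semiconjugates $\phi_\alpha$ to the irrational rotation $R_\alpha$, which is minimal. Given small box-shaped open sets $U_1 \times I_1 \subset U$ and $U_2 \times I_2 \subset V$ with $I_i \subset [0,1)$ short enough that $\phi_\alpha^n(U_1 \times I_1)$ stays within a single fundamental domain after reduction, the iterate takes the form $A^{k_n}(U_1) \times (I_1 + n\alpha - k_n)$ with $k_n$ essentially equal to $\lfloor n\alpha \rfloor$. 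Weyl equidistribution of $\{n\alpha \bmod 1\}$ gives an infinite sequence of $n$'s for which $(I_1 + n\alpha - k_n) \cap I_2 \neq \emptyset$; along this sequence $k_n \to \infty$. Topological mixing of the Anosov diffeomorphism $A$ on $\mathbb{T}^2$ then guarantees $A^{k_n}(U_1) \cap U_2 \neq \emptyset$ for all sufficiently large such $n$, so $\phi_\alpha^n(U) \cap V \neq \emptyset$ and transitivity of $\phi_\alpha$ follows.

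The main obstacle lies in the suspension branch: the integer part $k_n$ and the fractional part of $n\alpha$ are correlated, so the base and fiber directions cannot be treated independently. The decisive observation is that the subsequence picked out by equidistribution on $\mathbb{S}^1$ still satisfies $k_n \to \infty$, whereupon the uniform mixing of $A$ on $\mathbb{T}^2$ (valid for all sufficiently large iterates) discharges the fiber condition. A minor preliminary concern---invoking Plante's dichotomy under only chain-transitivity---is handled by the preparatory application of Lemma~\ref{chain-transitive}.
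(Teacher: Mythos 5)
Your proof is correct in substance but follows a genuinely different route from the paper. The paper does not split along Plante's dichotomy for the transitivity claim: it chooses $\alpha$ rationally independent of the (countably many) periods of periodic orbits, takes a periodic point $p\in U$ and a plaque $D\subset U$ of the weak-unstable leaf $W^{wu}(p)$, observes that $\bigcup_n \phi_\alpha^n(D)$ fills the whole leaf $W^{wu}(p)$, and concludes by minimality of the weak-unstable foliation (which comes from Lemma~\ref{chain-transitive}); Plante's theorem is invoked only for the \emph{moreover} clause, via density of the \emph{strong} unstable manifolds. Your argument instead handles the mixing branch trivially (any $\alpha$ works, and you even get mixing of $\phi_\alpha$ there, slightly more than claimed) and treats the suspension branch by an explicit computation in the model $M_A$, combining equidistribution of $\{n\alpha\}$ in the fiber with topological mixing of $A$ in the base; that computation is sound, including the point that $k_n\to\infty$ along the equidistribution subsequence. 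The one place you should be more careful is the reduction in the suspension branch: \emph{topological equivalence} (orbit equivalence) does not preserve time-$t$ maps, so as literally phrased the reduction would not transfer transitivity of the model's time-$\alpha$ map back to $\phi$. What saves you is that Plante's theorem actually gives more in the non-mixing case: the joint integrability of $E^{ss}\oplus E^{uu}$ produces a flow-invariant fibration by compact cross-sections with \emph{constant} return time, so after rescaling time by a constant $c$ the flow is genuinely conjugate (time-preserving) to the unit-speed suspension, and you should take the constant in the statement to be $c\alpha$ with $\alpha$ irrational. With that precision supplied, your proof is complete; the trade-off is that it leans on the sharp form of Plante's dichotomy, whereas the paper's single argument only needs countability of the period spectrum and minimality of $\mathcal{W}^{wu}$.
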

\begin{proof}
    By Lemma~\ref{chain-transitive}, the flow $\phi_t$ is transitive and has minimal weak-stable and weak-unstable foliations. By the Anosov closing lemma, transitivity of $\phi_t$ implies that periodic orbits are dense and there are countably many of them. Choose an irrational number $\alpha>0$ that is rationally independent of the periods of all periodic orbits. The time-$\alpha$ map $f:=\phi_{\alpha}$ is a partially hyperbolic diffeomorphism. Let $U$ and $V$ be any two open sets. Take a $\phi_t$-periodic point $p\in U$ and a small neighborhood $D\subset U$ contained in the weak-unstable leaf $W^{wu}_{\phi_t}(p)$. By the choice of $\alpha$, we have $\bigcup_{i\in \mathbb{N}}f^i(D)=W^{wu}_{\phi_t}(p)$. Since the weak-unstable foliation of $\phi_t$ is minimal, the leaf $W^{wu}_{\phi_t}(p)$ is dense, so $\bigcup_{i\in \mathbb{N}}f^i(D)$ intersects $V$. Hence, $f$ is transitive. If $\phi_t$ is not a suspension flow over an Anosov diffeomorphism on $\mathbb{T}^2$, then by \cite[Theorem 1.8]{Plante72}, the strong unstable (and strong stable) manifolds of $\phi_t$ (and hence of $f$) are dense in $M$. Consequently, the diffeomorphism $f$ is topologically mixing.
\end{proof}

As emphasized earlier, the previous result relies on a given transitive Anosov flow and a time-constant map of that flow. However, if one is only concerned with existence, we can perturb the time-$t$ map $\psi_t$ of any transitive Anosov flow to obtain a topologically mixing discretized Anosov flow $f$ associated with an Anosov flow that may differ from the original. Indeed, by the denseness of accessibility \cite{HHU08invent}, we can perform an arbitrarily $C^1$-small perturbation of the time-$t_0$ map to produce an accessible volume-preserving partially hyperbolic diffeomorphism $f$. Note that one only need this perturbation when $\pi_1(M)$ is not virtually solvable \cite{FP-adv}. This diffeomorphism $f$ is mixing (and in fact satisfies the K-property) with respect to the volume measure \cite{HHU08invent,BW10}, and hence it is topologically mixing. Moreover, $f$ is a discretized Anosov flow that is leaf conjugate to the given time-$t_0$ map. The plaque expansiveness of $\psi_{t_0}$ ensures the existence of a leaf conjugacy \cite[Theorem 7.1]{Hirsch-Pugh-Shub}, and consequently $f$ preserves every center leaf. Therefore, $f$ is a discretized Anosov flow \cite{Bonatti-Wilkinson05,Martinchich23} that is both accessible and topologically mixing.

The above discussion yields the following statement.

\begin{prop}
    Let $M$ be a closed 3-manifold supporting a chain transitive Anosov flow. Then, there exists a discretized Anosov flow that is accessible and topologically mixing.
\end{prop}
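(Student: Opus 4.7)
The plan is to realize the desired discretized Anosov flow as a small $C^1$-perturbation of a time-$t$ map of the given flow, following the road map sketched in the paragraph preceding the proposition. Fix any $t_0>0$ and consider $\phi_{t_0}:M\to M$. By Lemma~\ref{chain-transitive} the chain transitive Anosov flow $\phi_t$ is in fact transitive, so $\phi_{t_0}$ is a conservative partially hyperbolic diffeomorphism whose one-dimensional center bundle is tangent to the orbits of $\phi_t$; in particular its center foliation is plaque expansive in the sense of \cite{Hirsch-Pugh-Shub}.

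First I would apply the density of accessibility among $C^1$ conservative partially hyperbolic diffeomorphisms with one-dimensional center \cite{HHU08invent} to obtain a $C^1$-small perturbation $f$ of $\phi_{t_0}$ that is conservative and accessible. When $\pi_1(M)$ is virtually solvable this perturbation can be avoided by appealing to \cite{FP-adv}, which supplies accessibility directly, so one may set $f:=\phi_{t_0}$. In either case, \cite{HHU08invent,BW10} (extended to $C^{1+\alpha}$ via \cite{Brown22}) ensures that $f$ has the Kolmogorov property with respect to volume, so it is mixing with respect to volume; since this volume has full support, this upgrades to topological mixing of $f$.

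Next I would use the plaque expansiveness of the orbit foliation of $\phi_t$ together with the structural stability result \cite[Theorem 7.1]{Hirsch-Pugh-Shub} to produce an $f$-invariant center foliation that is leaf conjugate to the orbit foliation of $\phi_t$. To conclude that $f$ is a discretized Anosov flow in the sense of \cite{Bonatti-Wilkinson05,Martinchich23}, I would verify that $f$ maps each of its own center leaves to itself. This property holds for $\phi_{t_0}$ trivially, and it is preserved under $C^1$-small perturbations that respect the center foliation; combined with the leaf conjugacy, this places $f$ in the class of discretized Anosov flows.

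The main technical obstacle is arranging the perturbation in the first step so that $f$ preserves every individual center leaf rather than only the center foliation as a whole, ensuring that $f$ genuinely qualifies as a discretized Anosov flow instead of being merely leaf conjugate to one. This can be handled either by carrying out the accessibility perturbation only in the $E^s\oplus E^u$-directions, or by invoking the characterization of discretized Anosov flows from \cite{Martinchich23} to upgrade the topological leaf conjugacy to leaf-preservation; once this is in place, the remaining steps are a direct assembly of the cited results.
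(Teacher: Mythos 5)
Your proposal follows essentially the same route as the paper's: perturb the time-$t_0$ map $C^1$-slightly to an accessible conservative partially hyperbolic diffeomorphism using the density of accessibility \cite{HHU08invent}, deduce topological mixing from the K-property with respect to volume, and use plaque expansiveness together with the Hirsch--Pugh--Shub leaf conjugacy to conclude that the perturbed map is a discretized Anosov flow. Two remarks. First, the aside about skipping the perturbation points the wrong way: \cite{FP-adv} yields accessibility of non-wandering partially hyperbolic diffeomorphisms when $\pi_1(M)$ is \emph{not} virtually solvable, whereas in the virtually solvable case the flow is a suspension and its time-$t$ maps can carry $su$-tori (the fibers), so there the perturbation is indispensable; this does not damage your argument, since your first step already performs the perturbation in general. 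Second, your ``main technical obstacle'' is not actually an obstacle: because $\phi_{t_0}$ fixes every flow line, the leaf conjugacy produced by the structural stability theorem automatically forces the perturbed map to fix each leaf of its own center foliation, so no special engineering of the perturbation (e.g.\ restricting it to the $E^s\oplus E^u$ directions) is needed.
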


The above result does not imply that, given the existence of a transitive Anosov flow, every discretization of this flow is transitive or has the entire manifold as its non-wandering set. Indeed, one can easily construct partially hyperbolic diffeomorphisms with arbitrarily many disjoint non-wandering sets from a transitive suspension Anosov flow. Even when the ambient 3-manifold has non-virtually solvable fundamental group, there exist partially hyperbolic diffeomorphisms with proper attractors that are discretized from a transitive Anosov flow \cite{BonattiGuelman10}.

Note that this proposition applies to any chain transitive Anosov flow, including suspensions. However, the discretized Anosov flow obtained is not necessarily a time-constant map, in contrast to the construction in Proposition~\ref{flow->diffeo}. Moreover, even if it is a time-constant map, the associated (topological) Anosov flow may differ from the original.

\end{document}